   \newcommand{\h}{{h}}
   \def\@tvsp{\mathchoice{{}\mkern-3mu}{{}\mkern-3mu}{{}\mkern-3mu}{}}
   \def\ltrivert{|\@tvsp|\@tvsp|}
   \def\rtrivert{|\@tvsp|\@tvsp|}
   \def\@avgsp{\mathchoice{{}\mkern-6mu}{{}\mkern-6mu}{{}\mkern-6mu}{}}
   \def\llaverage{\{\@avgsp\{}
   \def\rraverage{\}\@avgsp\}}
   \newtheorem{remark}[theorem]{Remark}
   \newcolumntype{L}[1]{>{\raggedright\let\newline\\\arraybackslash\hspace{0pt}}m{#1}}
   \newcolumntype{C}[1]{>{\centering\let\newline\\\arraybackslash\hspace{0pt}}m{#1}}
\title{Multigrid algorithms for $hp$-Discontinuous Galerkin discretizations of elliptic problems}
\author{Paola F. Antonietti\thanks{MOX-Laboratory for Modeling and Scientific Computing, Dipartimento di Matematica, Politecnico di Milano, Piazza Leondardo da Vinci 32, 20133 Milano, Italy (\tt paola.antonietti@polimi.it)} \and Marco Sarti\thanks{MOX-Laboratory for Modeling and Scientific Computing, Dipartimento di Matematica, Politecnico di Milano, Piazza Leondardo da Vinci 32, 20133 Milano, Italy (\tt marco.sarti@polimi.it)}\and Marco Verani\thanks{MOX-Laboratory for Modeling and Scientific Computing Dipartimento di Matematica, Politecnico di Milano, Piazza Leondardo da Vinci 32, 20133 Milano, Italy (\tt marco.verani@polimi.it)}}
\begin{document}

\maketitle

\begin{abstract}
We present W-cycle multigrid algorithms for the solution of the linear system of equations arising from a wide class of $hp$-version discontinuous Galerkin discretizations of elliptic problems. Starting from a classical framework in multigrid analysis, we define a smoothing and an approximation property, which are used to prove the uniform convergence of the W-cycle scheme with respect to the granularity of the grid and the number of levels. The dependence of the convergence rate on the polynomial approximation degree $p$ is also tracked, showing that the contraction factor of the scheme deteriorates with increasing $p$. A discussion on the effects of employing inherited or non-inherited sublevel solvers is also presented. Numerical experiments confirm the theoretical results.
\end{abstract}

\begin{keywords} 
 $hp$-version discontinuous Galerkin, multigrid algorithms, elliptic problems
\end{keywords}

\begin{AMS}
65N30, 65N55.
\end{AMS}

\pagestyle{myheadings}
\thispagestyle{plain}
\markboth{P. F. Antonietti, M. Sarti, and M. Verani}{Multigrid algorithms for $hp$-Discontinuous Galerkin discretizations of elliptic problems}

\section{Introduction}
Discontinuous Galerkin (DG) methods have undergone a huge development in the last three decades mainly because of their flexibility in dealing with a wide range of equations within the same unified framework, in handling non-conforming grids and variable polynomial approximation orders,  and in imposing weakly boundary conditions. Therefore, the construction of effective solvers such as domain decomposition and multigrid methods has become an  active research field.  
Domain decomposition methods are based on the definition of subproblems on single subdomains, followed by a {\it coarse} correction, which ensures the scalability of the method. In the framework of domain decomposition algorithms for DG methods, in \cite{FengKarak01} a Schwarz preconditioner based on overlapping and non-overlapping partitions of the domain is analyzed.  The case of non-overlapping Schwarz methods with {\it  inexact} local solvers is addressed in a unified framework in \cite{AntoAyu07,AntoAyu08}. This topic has been further analyzed in \cite{LassTos03,FengKarak05,BriCamPinDahm08,AntoAyu09,BriCamPinDahmMass09,DrySark10,BarkBrennSung11,AntAyuBrenSung12}. 
For substructuring-type preconditioners for DG methods, we mention \cite{DrySark06,DryGalSark07}, where  Neumann-Neumann and Balancing Domain Decomposition with Constraints (BDDC) for Nitsche-type methods are studied. A unified approach for BDDC is recently proposed in \cite{DioDarm12}, while in \cite{BrennParSu13} a preconditioner for an over-penalized DG method is studied.  All these contributions focus on the $h$-version of DG methods; only recently some attention has been devoted to the development of efficient solvers for $hp$-DG methods. The first contribution in this direction is in \cite{AntHou}, where a non-overlapping Schwarz preconditioner for the $hp$-version of DG methods is analyzed, cf. also \cite{AntGiaHou13} for the extension to domains with complicated geometrical details. In \cite{CanPavPie12, BrCampCanDah13} BDDC and multilevel preconditioners for the $hp$-version of a DG scheme are analyzed, in parallel with conforming methods.  Substructuring-type preconditioners for $hp$-Nitsche type methods have been studied recently in \cite{AntAyuBerPen12}. The issue of preconditioning hybrid DG methods is investigated in \cite{SchoberlLehrenfeld_2013}.
Here we are interested in multigrid algorithms for $hp$-version DG methods, that exploit the solution of suitable subproblems defined on 
different levels of discretization. The levels can  differ for the mesh-size~($h$-multigrid), the polynomial approximation degree~($p$-multigrid) or both~($hp$-multigrid). 
In the framework of $h$-multigrid algorithms for DG methods, in \cite{GopKan03} a uniform (with respect to the mesh size) multigrid  preconditioner is studied. In \cite{HemHoffRaa03,HemHoffRaa04} a Fourier analysis of a multigrid solver for a class of DG discretizations is performed, focusing on the performance of several relaxation methods, while in \cite{RaaHem05} the analysis concerns convection-diffusion equations in the convection-dominated regime. Other contributions can be found for low-order DG approximations: in \cite{BrenZhao} it is shown that V-cycle, F-cycle and W-cycle multigrid algorithms converge uniformly with respect to all grid levels, with further extensions to an over-penalized method in \cite{BrennOwe} and graded meshes in \cite{BrennCuiSu,BrenCuGuSu}. At the best of our knowledge, no theoretical results in the framework of $p$- and $hp$-DG methods are available, even though $p$-multigrid solvers are widely used in practical applications, cf. \cite{FidkOli,Luo,Nastase,MascaHelen,Shahbazi,BasGhid}, for example.\\

In this paper, we present W-cycle $hp$-multigrid schemes for high-order DG discretizations of a second order elliptic problem. We consider a wide class of symmetric DG schemes, and, following the framework  presented in \cite{BrenScott,BrenZhao,BrennCuiSu}, 
we prove that the W-cycle algorithms converge uniformly with respect to the granularity of the underlying mesh and the number of levels, but the contraction factor of the scheme deteriorates with increasing $p$. The key point of our analysis is suitable smoothing and approximation properties of the $hp$-multigrid method. The smoothing scheme is a Richardson iteration, and we exploit the spectral properties of the stiffness operator to obtain the desired estimates.  The approximation property is based on the error estimates for $hp$-DG methods shown in \cite{PerSchot,HouSchSul,StaWihl}. We also discuss in details the effects of employing inherited or non-inherited sublevel solvers. More precisely, we show that the W-cycle algorithm converges uniformly with respect to the number of levels if non-inherited sublevel solvers are employed ({\it i.e.}, the coarse solvers are built rediscretizing our original problem on each level), whereas convergence cannot be independent of the number of levels if inherited bilinear forms are considered ({\it i.e.}, the coarse solvers are the restriction of the stiffness matrix constructed on the finest grid).
Those findings are confirmed by numerical experiments. \\

The rest of the paper is organized as follows. In Section~\ref{sec:theory} we introduce the model problem and its DG discretization, and recall some results needed in the forthcoming analysis. 
In Section~\ref{mulgr} we introduce W-cycle schemes based on non-inherited bilinear forms. The convergence analysis is performed in Section~\ref{convergence}, and further extended to a wider class of symmetric DG symmetric schemes in Section~\ref{sec.extesion}. Multigrid algorithms based on employing inherited bilinear forms are discussed in Section~\ref{sec:inehrited}.
The theoretical estimates are then verified through numerical experiments in Section~\ref{numerical}. In Section~\ref{conclusion}
we draw some conclusions. Finally, in Appendix~\ref{appendixA} we report some technical results.

\section{Model problem and DG discretization}
\label{sec:theory}
In this section, we introduce the model problem and its discretization by $hp$-version DG methods.\par\medskip
Throughout the paper we will use standard notation for Sobolev spaces \cite{Adams}. We write $x\lesssim y$ in lieu of $x\leq C y$ for a positive constant $C$ independent of the discretization parameters. When needed, the constants will be written explicitly.\par\medskip
Let $\Omega\in \mathbb{R}^d$, $d = 2,3$, be a polygonal/polyhedral domain and $f$ a given function in $H^{s-1}(\Omega)$, $s\geq1$.  We consider the weak formulation of the Poisson problem, with homogeneous Dirichlet boundary conditions: find $u\in V= H^{s+1}(\Omega)\cap H_0^1(\Omega)$, such that
\begin{equation}
\int_\Omega \nabla u \cdot \nabla v\ dx =\int_\Omega f v\ dx \, \qquad \forall v\in V,
\label{weak}
\end{equation}
We will make the following elliptic regularity assumption on the solution to \eqref{weak}: 
\begin{equation}
\left\|u\right\|_{H^{s+1}(\Omega)}\lesssim \left\|f\right\|_{{H^{s-1}}(\Omega)}.
\label{elliptic}
\end{equation}
Such a hypothesis can be relaxed, cf. \cite{Bren99,BrennCuiSu,BrenCuGuSu} for multigrid methods that do not assume regularity on the solution.\par\medskip

We introduce a quasi-uniform partition $\mathcal{T}_K$ of $\Omega$ into shape-regular elements $T$ of diameter $h_T$, and set $\h_K = \max_{T\in\mathcal{T}_K} h_T$. We suppose that each element $T\in \mathcal{T}_K$ is an affine image of a reference element $\widehat{T}$, {\it i.e.}, $T = \mathsf{F}_T(\widehat{T})$, which is either the open unit simplex or the unit hypercube in $\mathbb{R}^d$, $d=2,3$. We denote by $\mathcal{F}_K^{I}$, resp.  $\mathcal{F}_K^{B}$, the set of interior, resp. boundary, faces (if $d=2$ ``face'' means ``edge'') of the partition $\mathcal{T}_K$ and set $\mathcal{F}_K=\mathcal{F}_K^{I} \cup \mathcal{F}_K^{B}$, with the convention that an interior face is the non-empty intersection of the closure of two neighboring elements. Given $s\geq 1$, the {\it  broken Sobolev space} $H^s(\mathcal{T}_K)$ is made of the functions that are in $H^s$ elementwise. The DG scheme provides a discontinuous approximation of the solution of \eqref{weak}, which in general belongs to a finite dimensional subspace of $H^s(\mathcal{T}_K)$ defined as
\begin{equation}
\label{VK}
V_K=\{v\in L^2(\Omega):v\circ \mathsf{F}_T \in \mathbb{M}^{p_K}(\widehat{T})\quad\forall T\in \mathcal{T}_K\},
\end{equation}
where $\mathbb{M}^{p_K}(\widehat{T})$ is either the space of polynomials of total degree less than or equal to $p_K\geq 1$ on the simplex $\widehat{T}$, or the space  of all tensor-product polynomials on $\widehat{T}$ of degree $p_K$ in each coordinate direction, if $\widehat{T}$ is the reference hypercube in $\mathbb{R}^d$.\par\medskip
For regular enough vector-valued and scalar functions $\boldsymbol{\tau}$ and $v$, respectively, we define the {\it  jumps} and {\it  weighted averages} (with $\delta\in[0,1]$) across the face $F\in\mathcal{F}_K$ as follows
\begin{alignat*}{4}
\llbracket \boldsymbol{\tau} \rrbracket &= \boldsymbol{\tau}^{+} \cdot \mathbf{n}_{T^+} + \boldsymbol{\tau}^{-} \cdot \mathbf{n}_{T^{-}},\qquad{}& \llaverage \boldsymbol{\tau} \rraverage_\delta &= \delta\boldsymbol{\tau}^{+}  + (1-\delta)\boldsymbol{\tau}^,\qquad&F\in \mathcal{F}_K^I,\\
\llbracket v \rrbracket &= v^{+} \mathbf{n}_{T^+} + v^{-} \mathbf{n}_{T^{-}}, &\llaverage v \rraverage_\delta{}& = \delta v|_{T^+}  + (1-\delta)v|_{T^{-}}, &F\in \mathcal{F}_K^I,
\end{alignat*}
with $\mathbf{n}_{T^{\pm}}$ denoting the outward normal vector to  $\partial T^{\pm}$, and $\boldsymbol{\tau}^{\pm}$ and $v^{\pm}$ are the traces of $\boldsymbol{\tau}$ and $v$ taken within the interior of $T^{\pm}$, respectively.
 In the case $\delta = 1/2$ (standard average) we will simply write $\llaverage\cdot\rraverage$. 
On a boundary face $F\in \mathcal{F}_K^B$, we set $\llbracket v \rrbracket = v\mathbf{n}_T{}$, $\llaverage \boldsymbol{\tau} \rraverage_\delta =\boldsymbol{\tau}|_T$.
We observe that the following relations hold on each $F\in\mathcal{F}_K^I$
\begin{equation*}
\begin{aligned}
& \llaverage u\rraverage_\delta = \llaverage u\rraverage+\boldsymbol{\delta}\cdot \llbracket u\rrbracket,\qquad \llaverage u\rraverage_{1-\delta} = \llaverage u\rraverage-\boldsymbol{\delta}\cdot \llbracket u\rrbracket,
\end{aligned}
\end{equation*}
with $\boldsymbol{\delta}=(\delta-1/2)\boldsymbol{n}_F$, being $\boldsymbol{n}_F$ the outward unit normal vector to the face $F$ to which $\delta$ is associated.\\

Next, we introduce the local lifting operators $r_F:[L^1(F)]^d\rightarrow[V_K]^d$ and $l_F:L^1(F)\rightarrow[V_K]^d$ 
\begin{alignat*}{3}
\int_{\Omega} r_F(\boldsymbol{\tau})\cdot\boldsymbol{\eta}\ dx &=-\int_{F} \boldsymbol{\tau}\cdot\llaverage\boldsymbol{\eta}\rraverage\ ds\quad &&\forall \boldsymbol{\eta}\in [V_K]^d\quad\forall F \in \mathcal{F}_K^I\\
\int_{\Omega} l_F(v)\cdot\boldsymbol{\eta}\ dx &=-\int_{F} v\llbracket\boldsymbol{\eta}\rrbracket\ ds\quad &&\forall \boldsymbol{\eta}\in [V_K]^d\quad\forall F \in \mathcal{F}_K^I,
\end{alignat*}
and set
\begin{equation}
\label{RLdef}
\mathcal{R}_K(\boldsymbol{\tau}) = \sum_{F\in\mathcal{F}_K} r_F(\boldsymbol{\tau}),\qquad\mathcal{L}_K(v) = \sum_{F\in\mathcal{F}_K} l_F(v).
\end{equation}

The DG finite element formulation reads as follows: find $u_K \in V_K$ such that
\begin{equation}
\mathcal{A}_K(u_K,v_K)=\int_\Omega f v_K\ dx \quad \forall v_K\in V_K,
\label{DG}
\end{equation}
with $\mathcal{A}_K(\cdot,\cdot): V_K\times V_K\rightarrow \mathbb{R}$ defined as
\begin{align}
\label{bilinearK}
\mathcal{A}_K(w,v) = &\sum_{T\in\mathcal{T}_K}\int_T \nabla w \cdot \nabla v\ dx+\sum_{T\in\mathcal{T}_K}\int_T \nabla w \cdot (\mathcal{R}_K(\llbracket v \rrbracket)+\mathcal{L}_K(\boldsymbol{\beta}\cdot\llbracket v \rrbracket))\ dx\notag\\
&+\sum_{T\in\mathcal{T}_K}\int_T (\mathcal{R}_K(\llbracket w \rrbracket)+\mathcal{L}_K(\boldsymbol{\beta}\cdot\llbracket w \rrbracket))\cdot\nabla v\ dx+\mathcal{S}_K^j(w,v)\\
&+\theta\int_\Omega (\mathcal{R}_K(\llbracket w \rrbracket)+\mathcal{L}_K(\boldsymbol{\beta}\cdot\llbracket w \rrbracket))\cdot(\mathcal{R}_K(\llbracket v \rrbracket)+\mathcal{L}_K(\boldsymbol{\beta}\cdot\llbracket v \rrbracket))\ dx\notag,
\end{align}
where $\theta=0$ for the SIPG \cite{Arn82} and SIPG($\delta$) \cite{HeinPie} methods and $\theta = 1$ for the LDG method \cite{CoShu}. 
For the LDG method, $\boldsymbol{\beta}\in\mathbb{R}^d$ is a uniformly bounded (and possibly null) vector, whereas for the SIPG($\delta$) and SIPG methods $\boldsymbol{\beta}=\boldsymbol{\delta}$, and $\boldsymbol{\beta}=\boldsymbol{0}$, respectively.
The stabilization term $\mathcal{S}_K^j(\cdot,\cdot)$  is defined as
\begin{equation}
\label{jumpjump}
\mathcal{S}_K^j(w,v) = \sum_{F\in \mathcal{F}_K}\int_{F} \sigma_K\llbracket w \rrbracket\cdot \llbracket v\rrbracket\ ds\quad \forall w,v\in V_K
\end{equation}
where the penalization term $\sigma_K\in L^\infty(\mathcal{F}_K)$ is chosen such that
\begin{align}\label{eq:penalty}
&\sigma_K|_F = 
\frac{\alpha_K p_K^2}{\min(h_{T^+},h_{T^{-}})},
\quad F\in \mathcal{F}_K^I,
&\sigma_K|_F = 
\frac{\alpha_K p_K^2}{h_T}
\quad F\in \mathcal{F}_K^B,
\end{align}
with $\alpha_K\in \mathbb{R}^+$ and $h_{T^{\pm}}$ diameters of the neighboring elements $T^{\pm} \in \mathcal{T}_K$.

We endow the space $V_K$ with the DG norm $\| \cdot\|_{\textnormal{DG},K}$ defined as
\begin{equation}
\| v \|_{\textnormal{DG},K}^2 = \sum_{T\in\mathcal{T}_K} \|\nabla v\|^2_{L^2(T)}+\sum_{F\in \mathcal{F}_K}\|\sigma_K^{1/2}\llbracket v \rrbracket\|_{L^2(F)}^2.
\label{DGnorm}
\end{equation}
The following result ensures the well posedness of problem \eqref{DG}, cf. e.g., \cite{HouSchSul,PerSchot,AntHou,StaWihl}.
\begin{lemma}
\label{lem:contcoerc}
Let $V(\h_K) = V_K+V$. It holds
\begin{alignat}{2}
\mathcal{A}_K(u,v)&\lesssim \| u \|_{\textnormal{DG},K}\| v \|_{\textnormal{DG},K}\quad &\forall u,v\in V(\h_K), \label{cont}\\
\mathcal{A}_K(u,u)&\gtrsim \| u \|_{\textnormal{DG},K}^2 &\forall u\in V_K.\label{coerc}
\end{alignat}
For the SIPG and SIPG($\delta$) methods, coercivity holds provided the stabilization parameter $\alpha_K$ is chosen large enough.
\end{lemma}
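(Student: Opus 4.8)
The plan is to establish both the continuity bound \eqref{cont} and the coercivity bound \eqref{coerc} by reducing the nonstandard terms in $\mathcal{A}_K(\cdot,\cdot)$ to quantities controlled by the DG norm $\|\cdot\|_{\textnormal{DG},K}$. The crucial tool is a bound on the lifting operators: for each interior face $F$ and for the global operators $\mathcal{R}_K,\mathcal{L}_K$ defined in \eqref{RLdef}, one has the $L^2$-stability estimates
\begin{equation*}
\|\mathcal{R}_K(\llbracket v \rrbracket)\|_{L^2(\Omega)}^2 \lesssim \sum_{F\in\mathcal{F}_K}\|\sigma_K^{1/2}\llbracket v\rrbracket\|_{L^2(F)}^2,\qquad \|\mathcal{L}_K(\boldsymbol{\beta}\cdot\llbracket v\rrbracket)\|_{L^2(\Omega)}^2\lesssim \sum_{F\in\mathcal{F}_K}\|\sigma_K^{1/2}\llbracket v\rrbracket\|_{L^2(F)}^2.
\end{equation*}
These follow by testing the defining identities against $\boldsymbol{\eta}=\mathcal{R}_K(\llbracket v\rrbracket)$ (resp.\ $\mathcal{L}_K(\boldsymbol{\beta}\cdot\llbracket v\rrbracket)$), applying Cauchy--Schwarz on each face, and then invoking the $hp$-inverse/trace inequality that relates the $L^2(F)$ norm of a polynomial trace to its $L^2$ norm over the adjacent element, which is precisely where the factor $p_K^2/h$ built into $\sigma_K$ in \eqref{eq:penalty} is absorbed; the uniform boundedness of $\boldsymbol{\beta}$ handles the $\mathcal{L}_K$ term.

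For continuity \eqref{cont}, I would expand $\mathcal{A}_K(u,v)$ term by term. The volume term $\sum_T\int_T\nabla u\cdot\nabla v$ is bounded by $\|u\|_{\textnormal{DG},K}\|v\|_{\textnormal{DG},K}$ directly via Cauchy--Schwarz. The two mixed consistency terms couple $\nabla u$ (resp.\ $\nabla v$) with the lifting of the jump of $v$ (resp.\ $u$); here I apply Cauchy--Schwarz in $L^2(\Omega)$, bound the gradient factor by the first part of the DG norm, and bound the lifting factor by the estimate above, producing $\|u\|_{\textnormal{DG},K}\|v\|_{\textnormal{DG},K}$. The stabilization term $\mathcal{S}_K^j(u,v)$ is immediately controlled by the jump part of the norm through Cauchy--Schwarz in the weighted $L^2(F)$ inner products. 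Finally the $\theta$-term (present only for LDG) is a product of two liftings, each bounded by the jump seminorm, so it too is dominated by $\|u\|_{\textnormal{DG},K}\|v\|_{\textnormal{DG},K}$. Summing the contributions gives \eqref{cont}.

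For coercivity \eqref{coerc}, setting $u=v$ the volume and stabilization terms reproduce exactly $\|u\|_{\textnormal{DG},K}^2$, while the two mixed terms coincide and contribute $2\sum_T\int_T\nabla u\cdot(\mathcal{R}_K(\llbracket u\rrbracket)+\mathcal{L}_K(\boldsymbol{\beta}\cdot\llbracket u\rrbracket))$, which is the sole indefinite contribution. I would estimate it from below using Young's inequality $2ab\ge -\varepsilon a^2-\varepsilon^{-1}b^2$, absorbing the gradient square into the volume term and the lifting square (bounded as above by a constant times the jump seminorm) into the stabilization term. For SIPG and SIPG($\delta$) the indefinite term must be absorbed entirely into the penalty, which is possible only once $\alpha_K$ is taken large enough so that the constant from the lifting bound is dominated; for LDG the extra nonnegative $\theta$-term assists the absorption and coercivity holds for any positive penalty. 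The main obstacle is the sharp constant in the lifting estimate, since its $p$-dependence dictates how large $\alpha_K$ must be chosen; careful bookkeeping there is what makes the penalty threshold explicit and guarantees the inequality is uniform in $h_K$.
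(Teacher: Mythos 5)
Your proposal is correct and follows essentially the route the paper intends: the paper does not prove Lemma~\ref{lem:contcoerc} itself but defers to \cite{HouSchSul,PerSchot,AntHou,StaWihl}, where exactly your argument is used — $L^2$-stability of the liftings with the $p_K^2/h$ trace/inverse factor matching the penalty \eqref{eq:penalty} (recorded in the paper itself, in the form of Lemma~\ref{lem:restinv}, for the methods of Section~\ref{sec.extesion}), Cauchy--Schwarz for continuity, and Young's inequality with absorption into the penalty (threshold on $\alpha_K$ for SIPG/SIPG($\delta$), none for LDG where the $\theta$-term completes the square). Your observation that the lifting-based formulation is what allows continuity on $V(\h_K)$ and coercivity on $V_K$ in the \emph{same} DG norm is precisely the point the paper makes in the remark following the lemma.
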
\par
Since the bilinear form \eqref{bilinearK} contains the lifting operators, continuity in $V(\h_K)$ and coercivity in $V_K$ can be proved with respect to the same DG norm \eqref{DGnorm}. This is different from the approach proposed in \cite{Arn}, where continuity holds in $V(\h_K)$ in an augmented norm.\\

We have the following error estimates,
cf. \cite{PerSchot,HouSchSul,StaWihl}.
\begin{theorem}
\label{thm:DGerr}
Let $u$ be the exact solution of problem \eqref{weak} such that $u\in H^{s+1}(\mathcal{T}_K)$, $s \geq 1$, and let $u_K\in V_K$ be the DG solution of problem \eqref{DG}. Then,
\begin{align}
\|u-u_K\|_{\textnormal{DG},K}&\lesssim \frac{h_K^{\min(p_K,s)}}{p_K^{s-1/2}}\|u\|_{H^{s+1}(\mathcal{T}_K)}\label{DGerror1},\\
\|u-u_K\|_{L^2(\Omega)}&\lesssim \frac{\h_K^{\min(p_K,s)+1}}{p_K^{s}}\|u\|_{H^{s+1}(\mathcal{T}_K)}.\label{L2error1}
\end{align}
\end{theorem}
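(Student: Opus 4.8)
The plan is to derive the energy estimate \eqref{DGerror1} from a quasi-optimality (C\'ea-type) result in the DG norm, obtained by combining the consistency of the scheme with the continuity \eqref{cont} and coercivity \eqref{coerc} of Lemma~\ref{lem:contcoerc}, and then inserting sharp $hp$-polynomial approximation estimates; the $L^2$ bound \eqref{L2error1} will follow by an Aubin--Nitsche duality argument. The first step is Galerkin orthogonality. Since $u\in H^{s+1}(\Omega)\cap H_0^1(\Omega)$ with $s\geq 1$, its traces are single-valued and $u=0$ on $\partial\Omega$, so $\llbracket u\rrbracket=0$ on every $F\in\mathcal{F}_K$; hence the lifting and stabilization contributions in \eqref{bilinearK} acting on $\llbracket u\rrbracket$ vanish, and an element-wise integration by parts using $-\Delta u=f$ gives $\mathcal{A}_K(u,v_K)=\int_\Omega f v_K\,dx$ for all $v_K\in V_K$. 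Subtracting \eqref{DG} yields $\mathcal{A}_K(u-u_K,v_K)=0$ for all $v_K\in V_K$. Fixing an arbitrary $v_K\in V_K$, applying \eqref{coerc} to $u_K-v_K\in V_K$, then orthogonality and \eqref{cont} on $V(\h_K)$,
\begin{equation*}
\|u_K-v_K\|_{\textnormal{DG},K}^2 \lesssim \mathcal{A}_K(u_K-v_K,u_K-v_K)=\mathcal{A}_K(u-v_K,u_K-v_K)\lesssim \|u-v_K\|_{\textnormal{DG},K}\|u_K-v_K\|_{\textnormal{DG},K},
\end{equation*}
so by the triangle inequality $\|u-u_K\|_{\textnormal{DG},K}\lesssim \inf_{v_K\in V_K}\|u-v_K\|_{\textnormal{DG},K}$.

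Next I would take $v_K=\Pi_K u$, a standard $hp$-quasi-interpolant of $u$, and bound the two contributions of the DG norm \eqref{DGnorm} element- and face-wise. The gradient term obeys the $hp$-approximation estimate $\|\nabla(u-\Pi_K u)\|_{L^2(T)}\lesssim (h_T^{\min(p_K,s)}/p_K^{s})\|u\|_{H^{s+1}(T)}$. For the jump term I would use the penalty scaling \eqref{eq:penalty}, $\sigma_K\sim p_K^2/\min(h_{T^+},h_{T^-})$, together with an $hp$-trace inequality and the same approximation estimate, which produces $\|\sigma_K^{1/2}\llbracket u-\Pi_K u\rrbracket\|_{L^2(F)}\lesssim (\h_K^{\min(p_K,s)}/p_K^{s-1/2})\|u\|_{H^{s+1}}$. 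Summing over elements and faces and using $h_T\le \h_K$, the face contribution dominates and delivers exactly \eqref{DGerror1}.

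Finally, for \eqref{L2error1} I would run the duality argument. Let $z\in H_0^1(\Omega)$ solve $-\Delta z=u-u_K$; elliptic regularity \eqref{elliptic} gives $\|z\|_{H^2(\Omega)}\lesssim \|u-u_K\|_{L^2(\Omega)}$. Since $\mathcal{A}_K$ is symmetric (hence adjoint consistent) and $\llbracket z\rrbracket=0$, the same consistency computation yields $\mathcal{A}_K(v,z)=\int_\Omega v(u-u_K)\,dx$ for all $v\in V(\h_K)$; choosing $v=u-u_K$ and using Galerkin orthogonality,
\begin{equation*}
\|u-u_K\|_{L^2(\Omega)}^2 = \mathcal{A}_K(u-u_K,z)=\mathcal{A}_K(u-u_K,z-\Pi_K z)\lesssim \|u-u_K\|_{\textnormal{DG},K}\,\|z-\Pi_K z\|_{\textnormal{DG},K}.
\end{equation*}
Applying \eqref{DGerror1} to the dual with $s=1$ gives $\|z-\Pi_K z\|_{\textnormal{DG},K}\lesssim (\h_K/p_K^{1/2})\|z\|_{H^2(\Omega)}\lesssim (\h_K/p_K^{1/2})\|u-u_K\|_{L^2(\Omega)}$; inserting \eqref{DGerror1} for the primal error and dividing by $\|u-u_K\|_{L^2(\Omega)}$ yields \eqref{L2error1}.

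The integration-by-parts bookkeeping for the lifting terms and the triangle inequalities are routine; the delicate point is the sharp tracking of the $p$-dependence, which rests entirely on the $hp$-version polynomial approximation and trace inequalities of \cite{PerSchot,HouSchSul,StaWihl}. In particular, the loss of half a power of $p$ (the $p_K^{s-1/2}$ in the jump term versus $p_K^{s}$ in the gradient term) originates in the $hp$-trace inequality, and it is this face term—not the element term—that dictates the rate in the DG norm and, through the duality step, propagates to the $L^2$ estimate. The main obstacle is therefore to obtain these face estimates with constants independent of $\h_K$ and $p_K$, uniformly over simplicial and tensor-product reference elements.
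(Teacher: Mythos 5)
Your argument breaks at its very first step: Galerkin orthogonality does \emph{not} hold for the bilinear form \eqref{bilinearK}, because the consistency terms are realized through the lifting operators $\mathcal{R}_K$, $\mathcal{L}_K$, which map into the \emph{discrete} space $[V_K]^d$. Inserting the exact solution $u$ and a test function $v\in V_K$, the terms in $\llbracket u\rrbracket$ do vanish, and elementwise integration by parts gives $\sum_{T\in\mathcal{T}_K}\int_T\nabla u\cdot\nabla v\,dx=\int_\Omega f v\,dx+\sum_{F\in\mathcal{F}_K}\int_F\llaverage\nabla u\rraverage\cdot\llbracket v\rrbracket\,ds$; but the remaining term $\int_\Omega\nabla u\cdot\mathcal{R}_K(\llbracket v\rrbracket)\,dx$ equals $-\sum_{F\in\mathcal{F}_K}\int_F\llaverage\boldsymbol{\Pi}_K\nabla u\rraverage\cdot\llbracket v\rrbracket\,ds$, with $\boldsymbol{\Pi}_K$ the $L^2$-projection onto $[V_K]^d$, since the defining identity of $r_F$ may only be tested against discrete fields and $\nabla u\notin[V_K]^d$. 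The two face contributions therefore do not cancel, and one is left with $\mathcal{A}_K(u-u_K,v)=R(u,v)$, the nonzero residual of \eqref{Galortho}. (For discrete arguments $\nabla w\in[V_K]^d$, so on $V_K\times V_K$ the form coincides with the familiar SIPG/LDG forms; strong consistency nevertheless fails, and this is precisely the point of the Perugia--Sch\"otzau analysis the paper follows.) For the same reason your adjoint-consistency identity $\mathcal{A}_K(v,z)=\int_\Omega v(u-u_K)\,dx$ for all $v\in V(\h_K)$ is false, so both your C\'ea-type bound and your duality identity are unjustified as written.

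The paper's proof repairs exactly this point: C\'ea's lemma is replaced by Strang's lemma \eqref{strang}, with the consistency error controlled by the residual bound \eqref{residualest}, $|R(u,v)|\lesssim h_K^{\min(p_K+1,s)}p_K^{-s}\|\nabla u\|_{H^{s}(\mathcal{T}_K)}\|v\|_{\textnormal{DG},K}$, and the duality step carries two extra terms, $\|u-u_K\|_{L^2(\Omega)}^2=\mathcal{A}_K(w-w_I,u-u_K)-R(w,u-u_K)-R(u,w-w_I)$, each of which must be estimated separately. Because the residual loses only $p_K^{-s}$, against the $p_K^{-(s-1/2)}$ of the interpolation bound \eqref{interpDG}, these extra contributions are of higher order in $p_K$, which is why the rates you guessed in \eqref{DGerror1}--\eqref{L2error1} happen to be correct; the remainder of your outline (Babu{\v{s}}ka--Suri interpolant, multiplicative trace inequality, the half power of $p_K$ lost on the face term dominating the rate, and the dual interpolation estimate with $s=1$) does match the appendix. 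But without identifying and bounding $R(\cdot,\cdot)$ the proof is incomplete for the whole class of lifting-based schemes considered here, in particular for LDG ($\theta=1$, $\boldsymbol{\beta}\neq\boldsymbol{0}$), where the inconsistency enters through both $\mathcal{R}_K$ and $\mathcal{L}_K$.
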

The proof of Theorem~ \ref{thm:DGerr} follows the lines given in \cite{PerSchot}; for the sake of completeness we sketch it in Appendix~\ref{appendixA}.
\begin{remark}
\label{rem:err}
Optimal error estimates with respect to $p_K$
can be shown using the projector of \cite{GeorgSul05} provided the solution belongs to a suitable augmented space, or whenever a continuous interpolant can be built, cf. \cite{StaWihl}. Therefore, in the following we will write
\begin{equation}\label{error_estimates}
\begin{aligned}
\|u-u_K\|_{\textnormal{DG},K}&\lesssim \frac{h_K^{\min(p_K,s)}}{p_K^{s-\mu/2}}\|u\|_{H^{s+1}(\mathcal{T}_K)},\\
\|u-u_K\|_{L^2(\Omega)}&\lesssim \frac{\h_K^{\min(p_K,s)+1}}{p_K^{s+1-\mu}}\|u\|_{H^{s+1}(\mathcal{T}_K)},
\end{aligned}
\end{equation}
with $\mu = 0,1$ for optimal and suboptimal $p_K$ estimates, respectively.
\end{remark}
\section{Multigrid W-cycle methods with non-inherited sublevel solvers}
\label{mulgr}
Before introducing our W-cycle algorithms, we make some further assumptions and introduce some notation.
We suppose that the grid $\mathcal{T}_K$ has been obtained by $K-1$ successive uniform refinements  using the red-green algorithm of an initial (coarse) quasi-uniform partition $\mathcal{T}_1$. More precisely, for $d=2$, given the initial mesh $\mathcal{T}_1$ of size $\h_1$, the grid $\mathcal{T}_k$, $k=2,\dots,K$, is built by splitting each triangle/parallelogram of $\mathcal{T}_{k-1}$ into four congruent triangles/parallelograms connecting the midpoints of opposite edges, thus obtaining a mesh with size $\h_k = \h_12^{1-k}$. If $d=3$ each element is splitted into eight tetrahedra/parallelepipeds. The associated discontinuous
spaces $V_1\subseteq V_2\subseteq\dots\subseteq V_K$ are defined according to \eqref{VK}
$$V_k:=\{v\in L^2(\Omega):v\circ \mathsf{F}_T \in \mathbb{M}^{p_k}(\widehat{T})\quad \forall T\in \mathcal{T}_k\},\quad k=1,\dots,K.$$
Analogously $V(\h_k) = V_k+V$.
We will assume that
\begin{equation}
\label{pkpk1}
p_{k-1}\leq p_{k}\lesssim p_{k-1}\qquad \forall k = 2,\dots, K,
\end{equation}
that is when varying from one mesh level to another the polynomial approximation degrees vary moderately.
Let $n_k$ be the dimension of $V_k$, and let $\{  \phi^k_i \}_{i=1}^{n_k}$ be a set of basis functions of $V_k$. Any $v\in V_k$ can then be written as
$$v=\sum_{i=1}^{n_k}v_i \phi^k_i,\quad v_i\in\mathbb{R},\quad i = 1,\dots,n_k.$$
We will suppose that $\{\phi^k_i\}_{i=1}^{n_k}$ is a set of basis functions which are orthonormal with respect to the $L^2(\widehat{T})$-inner product, being $\widehat{T}$ the reference element. A detailed construction of such a basis can be found in \cite{AntHou}.
On $V_k$ we then introduce the mesh-dependent inner product
\begin{equation}
\label{innerprod}
(u,v)_k=\h_k^d\sum_{i=1}^{n_k}u_iv_i\quad \forall u,v,\in V_k,\quad u_i,v_j\in \mathbb{R},\ i,j=1,\dots,n_k.
\end{equation}
The next result establishes the connection between \eqref{innerprod} and the $L^2$ norm.
\begin{lemma}
For any $v\in V_k$, $k=1,\dots,K$, it holds 
\begin{equation}
\label{equivprod}
(v, v)_{k}\lesssim \left\|v\right\|_{L^2(\Omega)}^2 \lesssim (v, v)_{k}.
\end{equation}
\end{lemma}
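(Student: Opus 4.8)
The plan is to exploit the fact that, for a DG space, each basis function $\phi_i^k$ is supported on a single element, so that the $L^2(\Omega)$ norm decouples into a sum of element-wise contributions, and on each element the $L^2(\widehat{T})$-orthonormality of the reference basis turns the integral into a plain sum of squared coefficients.

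First I would split $\|v\|_{L^2(\Omega)}^2 = \sum_{T\in\mathcal{T}_k}\|v\|_{L^2(T)}^2$ and, on each $T = \mathsf{F}_T(\widehat{T})$, change variables to the reference element. Writing $v|_T\circ\mathsf{F}_T = \sum_{i\in I_T} v_i\,\widehat{\phi}_i$, where $I_T$ collects the indices of the basis functions supported on $T$ and $\widehat{\phi}_i = \phi_i^k\circ\mathsf{F}_T$, the change of variables gives $\|v\|_{L^2(T)}^2 = |\det J_T|\int_{\widehat{T}}\bigl(\sum_{i\in I_T}v_i\widehat{\phi}_i\bigr)^2\,d\widehat{x}$, with $J_T$ the constant Jacobian of the affine map $\mathsf{F}_T$. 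Invoking the $L^2(\widehat{T})$-orthonormality of $\{\widehat{\phi}_i\}$ collapses the integral to $\sum_{i\in I_T}v_i^2$, so that $\|v\|_{L^2(T)}^2 = |\det J_T|\sum_{i\in I_T}v_i^2$.

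Next I would bound the Jacobian determinant. Since $\mathsf{F}_T$ is affine and $\widehat{T}$ has fixed reference measure, $|\det J_T|$ equals $|T|/|\widehat{T}|$, and shape-regularity together with the quasi-uniformity of $\mathcal{T}_k$ yields $|\det J_T|\approx h_T^d\approx \h_k^d$, with hidden constants independent of $T$ and of $k$. Summing over all elements and using $\sum_{T}\sum_{i\in I_T}v_i^2 = \sum_{i=1}^{n_k}v_i^2$ then gives $\|v\|_{L^2(\Omega)}^2 \approx \h_k^d\sum_{i=1}^{n_k}v_i^2 = (v,v)_k$, which is precisely the two-sided bound to be proven.

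The only point requiring care is the uniformity of the constants in $|\det J_T|\approx \h_k^d$: this is exactly where the shape-regularity and quasi-uniformity hypotheses on the partition enter, guaranteeing that the equivalence constants do not degenerate as the elements shrink or as $k$ grows. Everything else follows directly from the orthonormality of the reference basis and the single-element support of the DG shape functions.
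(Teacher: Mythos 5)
Your proof is correct and follows essentially the same route as the paper's: expand $v$ in the basis, use the $L^2(\widehat{T})$-orthonormality on the reference element to collapse the integral to a sum of squared coefficients, and conclude by a scaling argument giving $\|\phi_i^k\|_{L^2(T)}^2\approx \h_k^d$. The only cosmetic difference is that you derive the scaling explicitly via the Jacobian determinant and quasi-uniformity, where the paper cites a standard reference for the same fact.
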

\begin{proof}
Let $v\in V_k$, we write $v = \sum_{i=1}^{n_k} v_i\phi^k_i$ and
\begin{equation*}
\|v\|_{L^2(T)}^2= \int_T \left(\sum_{i=1}^{n_k} v_i\phi^k_i\right)\left(\sum_{j=1}^{n_k} v_j\phi^k_j\right)\ dx 
= \sum_{i,j=1}^{n_k} v_i v_j\int_T \phi^k_i\phi^k_j\ dx
= \sum_{i=1}^{n_k} v_i^2
\|\phi^k_i\|_{L^2(T)}^2,
\end{equation*}
where in the last step we have used a scaling argument and the fact that the basis functions $\{  \phi^k_i \}_{i=1}^{n_k}$ are $L^2$-orthogonal on the reference element. Using that $\h_k^d\lesssim\|\phi^k_i\|_{L^2(T)}\lesssim \h_k^d $ (cf. \cite[Proposition 3.4.1]{QuaVal})
the thesis follows.
\end{proof}\par\medskip
Once the basis of $V_K$ is chosen, equation \eqref{DG} can be written as the following linear system of equations
\begin{equation}
\label{system}
A_Ku_K=f_K,
\end{equation}
where the operators $A_K:V_K\rightarrow V_K'$ and $f_K\in V_K'$ are defined as
\begin{equation}\label{defAK}\begin{aligned}
&(A_K u,v)_K=\mathcal{A}_K(u,v), 
&(f_K,v)_K=\int_\Omega f v\ dx
&&\forall u,v\in V_K,
\end{aligned}
\end{equation}
being $V_k'$ the dual of $V_k$.
In order to define the subproblems on the coarse levels $k=1,\ldots, K-1$, we consider the corresponding bilinear forms $\mathcal{A}_k(\cdot,\cdot): V_k\times V_k\rightarrow \mathbb{R}$, cf. \eqref{bilinearK}
\begin{align}
\label{bilineark}
\mathcal{A}_{k} (u,v) = &\sum_{T\in\mathcal{T}_k}\int_T \nabla u \cdot \nabla v\ dx+\sum_{T\in\mathcal{T}_k}\int_T \nabla u \cdot (\mathcal{R}_k(\llbracket v \rrbracket)+\mathcal{L}_k(\boldsymbol{\beta}\cdot\llbracket v \rrbracket))\ dx\notag\\
&+\sum_{T\in\mathcal{T}_k}\int_T (\mathcal{R}_k(\llbracket u \rrbracket)+\mathcal{L}_k(\boldsymbol{\beta}\cdot\llbracket u \rrbracket))\cdot\nabla v\ dx+\mathcal{S}_k^j(u,v)\\
&+\theta\sum_{T\in\mathcal{T}_k}\int_T (\mathcal{R}_k(\llbracket u \rrbracket)+\mathcal{L}_k(\boldsymbol{\beta}\cdot\llbracket u \rrbracket))\cdot(\mathcal{R}_k(\llbracket v \rrbracket)+\mathcal{L}_k(\boldsymbol{\beta}\cdot\llbracket v \rrbracket))\ dx,\notag
\end{align}
where 
\begin{equation*}
\mathcal{S}_k^j(w,v) = \sum_{F\in \mathcal{F}_k}\int_{F} \sigma_k\llbracket w \rrbracket\cdot \llbracket v\rrbracket\ ds\quad \forall w,v\in V_k,
\end{equation*}
cf. \eqref{jumpjump}, and where $\sigma_k\in L^\infty(\mathcal{F}_k)$ is defined according to \eqref{eq:penalty}, but on the level $k$.
We then set
\begin{equation}
(A_{k} u,v)_{k}=\mathcal{A}_{k}(u,v)\qquad \forall u,v\in V_{k}.
\label{defAk1}
\end{equation}

Recalling Lemma~\ref{lem:contcoerc}, continuity and coercivity of the bilinear forms $\mathcal{A}_{k} (\cdot,\cdot)$, $k =1,\dots,K-1$, with respect to the DG norms defined on the level $k$ 
\begin{equation}
\label{DGnormk}
\|v\|_{\textnormal{DG},k}^2 = \sum_{T\in\mathcal{T}_k} \|\nabla v\|^2_{L^2(T)}+\sum_{F\in\mathcal{F}_k} \| \sigma^{1/2}_k\llbracket v\rrbracket\|^2_{L^2(F)},
\end{equation}
easily follow, {\it i.e.},
\begin{alignat}{2}
\mathcal{A}_k(u,v)&\lesssim \| u \|_{\textnormal{DG},k}\| v \|_{\textnormal{DG},k}\quad &&\forall u,v\in V(\h_k), \label{contk}\\
\mathcal{A}_k(u,u)&\gtrsim \| u \|_{\textnormal{DG},k}^2 &&\forall u\in V_k.\label{coerck}
\end{alignat}
Moreover, since it holds that
\begin{equation}
\label{hest}
\h_{k}\leq\h_{k-1}\lesssim\h_{k}\quad\forall k = 2,\dots,K,
\end{equation}
and thanks to hypothesis \eqref{pkpk1}, it also follows that
\begin{equation}
\label{DGequiv}
\| v_{k-1} \|_{\textnormal{DG},k-1}\leq \| v_{k-1} \|_{\textnormal{DG},k}\lesssim \frac{p_{k}}{p_{k-1}} \| v_{k-1} \|_{\textnormal{DG},k-1}\lesssim \| v_{k-1} \|_{\textnormal{DG},k-1},
\end{equation}
for any $v\in V_{k-1}$, $k = 2,\dots,K$. 
The hidden constant in the above inequality  depends on the ratio $p_{k}/p_{k-1}$, which means that in absence of the assumption \eqref{pkpk1}, such a dependence should be taken into account.\\

To introduce our multigrid algorithm, we need two ingredients: intergrid transfer operators ({\it  restriction} and {\it  prolongation}) and a smoothing iteration. The prolongation operator connecting the space $V_{k-1}$ to $V_{k}$ is denoted by $R^{k}_{k-1}:V_{k-1}\rightarrow V_{k}$, while the restriction operator is the adjoint with respect to the discrete inner product \eqref{innerprod} and is denoted by 
$R^{k-1}_{k}:V_{k}\rightarrow V_{k-1}$, {\it  i.e.},
$$(R_{k-1}^{k}v,w)_{k}  = (v,R^{k-1}_{k}w)_{k-1}\qquad \forall v\in V_{k-1},w\in V_{k}.$$
We next define the operator $P_{k}^{k-1}:V_{k}\rightarrow V_{k-1}$ as
\begin{equation}
\mathcal{A}_{k-1}(P_{k}^{k-1}v,w) = \mathcal{A}_{k}(v,R^{k}_{k-1}w)\qquad \forall v\in V_{k}, w\in V_{k-1}.
\label{project}
\end{equation}
For the smoothing scheme, we choose a Richardson iteration, given by:
\begin{equation}
\label{defsmooth}
B_k = \Lambda_k\textnormal{I}_k,
\end{equation}
where $\textnormal{I}_k$ is the identity operator and $\Lambda_k\in\mathbb{R}$ represents a bound for the spectral radius of $A_k$. According to \cite[Lemma 2.6]{AntHou} and using the equivalence \eqref{equivprod}, the following estimate for the maximum eigenvalue of $A_k$ can be shown
\begin{equation}
\label{eigA}
\lambda_{max}(A_k)\lesssim \frac{p_k^4}{h_k^{2}},
\end{equation}
hence,
\begin{equation}
\label{Lambda}
\Lambda_k\lesssim \frac{p_k^4}{h_k^{2}}.
\end{equation}
Let us now consider the linear system of equations on level $k$
\begin{equation*}
A_k z=g,
\end{equation*}
with a given $g\in V_k'$. We denote by $\mathsf{MG}_\mathcal{W} (k,g,z_0,m_1,m_2)$ the approximate solution obtained by applying the $k$-th level iteration to the above linear system, with initial guess $z_0$ and using $m_1$, $m_2$ number of pre- and post-smoothing steps, respectively. For $k = 1$, (coarsest level) the solution is computed with a direct method, that is
$$\mathsf{MG}_\mathcal{W} (1,g,z_0,m_1,m_2) = A_1^{-1}g,$$
while for $k>1$ we adopt the recursive procedure described in Algorithm~\ref{multilevel}.
\begin{algorithm}[H]
\caption{Multigrid W-cycle scheme}
\label{multilevel}
\begin{algorithmic}
\State \underline{{\it  Pre-smoothing}}:
\For{$i=1,\dots,m_1$} \State $z^{(i)}=z^{(i-1)}+B_k^{-1}(g-A_kz^{(i-1)});$ \EndFor\vspace{0.3cm}
\State \underline{{\it  Coarse grid correction}}:
\State $r_{k-1} = R_k^{k-1}(g-A_kz^{(m_1)})$;
\State $\overline{e}_{k-1} = \mathsf{MG}_\mathcal{W} (k-1,r_{k-1},0,m_1,m_2)$;
\State $e_{k-1} = \mathsf{MG}_\mathcal{W} (k-1,r_{k-1},\overline{e}_{k-1},m_1,m_2)$;
\State $z^{(m_1+1)}=z^{(m_1)}+R_{k-1}^ke_{k-1}$;\vspace{0.3cm}
\State \underline{{\it  Post-smoothing}}:
\For{$i=m_1+2,\dots,m_1+m_2+1$} \State $z^{(i)}=z^{(i-1)}+B_k^{-1}(g-A_kz^{(i-1)});$ \EndFor\vspace{0.3cm}
\State $\mathsf{MG}_\mathcal{W} (k,g,z_0,m_1,m_2)=z^{(m_1+m_2+1)}.$
\end{algorithmic}
\end{algorithm}

We now introduce the error propagation operator 
$$\mathbb{E}_{k,m_1,m_2} (z-z_0)=z-\mathsf{MG}_\mathcal{V}(k,g,z_0,m_1,m_2),$$
and recall that, according to \cite{Hack,bramble},  the following relation holds
\begin{equation}
\begin{cases} 
\mathbb{E}_{1,m_1,m_2}v &= 0\\
\mathbb{E}_{k,m_1,m_2}v &= G_k^{m_2}(\textnormal{I}_k-R_{k-1}^k(\textnormal{I}_k-\mathbb{E}_{k-1,m_1,m_2}^2)P_k^{k-1})G_k^{m_1}v\qquad k>1,
\end{cases}
\label{defEk}
\end{equation}
where $G_k = \textnormal{I}_k-B_k^{-1}A_k$ satisfies
\begin{equation*}
\mathcal{A}_k(G_kv,w) = \mathcal{A}_k(v,G_kw)\quad \forall v,w\in V_k.
\label{Gsymm}
\end{equation*}
Indeed, using the definition of $G_k$ and that $B_k = \Lambda_k\textnormal{I}_k$, cf.\eqref{defsmooth},
\begin{align*}
\mathcal{A}_k(G_kv,w) &= \mathcal{A}_k(v,w)-\frac{1}{\Lambda_k}(A_kv,A_kw)_k
&=\mathcal{A}_k(v,w)-\mathcal{A}_k(v,\frac{A_k}{\Lambda_k}w)=\mathcal{A}_k(v,G_kw).
\end{align*}

\section{Convergence analysis of the W-cycle multigrid method}
\label{convergence}
To prove convergence, we need to obtain an estimate for $\mathbb{E}_{k,m_1,m_2}$ in a proper norm. We then define 
\begin{equation}
\label{discrnorm}
\ltrivert v\rtrivert_{s,k}=\sqrt{(A_k^sv,v)_k}\qquad \forall s\in\mathbb{R},\ v\in V_k,\quad k=1,\dots,K,
\end{equation}
and observe that 
\begin{equation}
\label{norm1Ak}
\ltrivert v\rtrivert_{1,k}^2 = 
\sqrt{(A_kv,v)_k}=\mathcal{A}_k(v,v)\quad \ltrivert v\rtrivert_{0,k}^2 =(v,v)_k\quad\forall v\in V_k,
\end{equation}
and, by virtue of \eqref{equivprod}, it holds that
\begin{equation}
\label{equivL2}
\ltrivert v\rtrivert_{0,k}\lesssim \left\|v\right\|_{L^2(\Omega)} \lesssim \ltrivert v\rtrivert_{0,k}.
\end{equation}
In the following we will often make use of the eigenvalue problem associated to $A_k$
\begin{equation}
\label{eig}
A_k\psi^k_i=\lambda_i \psi^k_i,
\end{equation}
where $0<\lambda_1\leq\lambda_2\leq\dots\leq\lambda_{n_k}$ represent the eigenvalues of $A_k$ and 
$\{ \psi^k_i\}_{i=1}^{n_k}$ are the associated eigenvectors which form a basis for $ V_k$. We can then write any $v\in V_k$ as
\begin{equation}
\label{base}
 v=\sum_{i=1}^{n_k} v_i\psi^k_i,\qquad v_i\in\mathbb{R}.
\end{equation}
Next, we introduce the following generalized Cauchy-Schwarz inequality.
\begin{lemma}
\label{lem:Csgen}
For any $v,w\in V_k$ and $s\in \mathbb{R}$, it holds
\begin{equation}
\mathcal{A}_k(v,w)\leq \ltrivert v \rtrivert_{1+s,k}\ltrivert w\rtrivert_{1-s,k}.
\label{Csgen}
\end{equation}
\end{lemma}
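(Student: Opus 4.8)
The plan is to diagonalize $A_k$ and reduce the claim to the elementary Cauchy--Schwarz inequality in coordinates. First I would record the two structural facts that make the discrete norms \eqref{discrnorm} well behaved: $A_k$ is self-adjoint with respect to $(\cdot,\cdot)_k$, which follows from the symmetry of the bilinear form $\mathcal{A}_k$ together with the defining identity $(A_k u,v)_k=\mathcal{A}_k(u,v)$, and it is positive definite by coercivity \eqref{coerck}. Consequently the eigenvalue problem \eqref{eig} admits a basis $\{\psi_i^k\}_{i=1}^{n_k}$ that we may take orthonormal with respect to $(\cdot,\cdot)_k$, and the real powers $A_k^s$ entering \eqref{discrnorm} are unambiguously defined by $A_k^s\psi_i^k=\lambda_i^s\psi_i^k$, which is legitimate precisely because each $\lambda_i>0$.

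Next, writing $v=\sum_i v_i\psi_i^k$ and $w=\sum_j w_j\psi_j^k$ as in \eqref{base} and invoking orthonormality, I would compute the three quantities appearing in the statement explicitly. For the bilinear form one gets
\[
\mathcal{A}_k(v,w)=(A_kv,w)_k=\sum_{i=1}^{n_k}\lambda_i v_i w_i,
\]
while the two norms become
\[
\ltrivert v\rtrivert_{1+s,k}^2=\sum_{i=1}^{n_k}\lambda_i^{1+s}v_i^2,
\qquad
\ltrivert w\rtrivert_{1-s,k}^2=\sum_{i=1}^{n_k}\lambda_i^{1-s}w_i^2.
\]

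The decisive observation is that the exponents split additively: since $\lambda_i>0$ we may write $\lambda_i=\lambda_i^{(1+s)/2}\,\lambda_i^{(1-s)/2}$, so that each summand of $\mathcal{A}_k(v,w)$ factors as $\bigl(\lambda_i^{(1+s)/2}v_i\bigr)\bigl(\lambda_i^{(1-s)/2}w_i\bigr)$. Applying the ordinary Cauchy--Schwarz inequality in $\mathbb{R}^{n_k}$ to the vectors with components $\lambda_i^{(1+s)/2}v_i$ and $\lambda_i^{(1-s)/2}w_i$ then yields \eqref{Csgen} directly.

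Because the argument is an exact diagonalization followed by a finite-dimensional Cauchy--Schwarz step, there is no genuine analytic difficulty; the only points requiring care are structural rather than computational. The first is justifying that the eigenvectors can be chosen $(\cdot,\cdot)_k$-orthonormal, which hinges on $\mathcal{A}_k$ being symmetric so that $A_k$ is self-adjoint in this inner product; the second is confirming that the real, possibly negative, powers $A_k^s$ used to define $\ltrivert\cdot\rtrivert_{s,k}$ are meaningful, which is guaranteed by the strict positivity of the spectrum. I expect the verification of self-adjointness (and hence of the spectral decomposition) to be the main conceptual prerequisite, though it is immediate for the symmetric DG forms considered here.
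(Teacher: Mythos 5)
Your proof is correct and takes essentially the same route as the paper's: expand $v$ and $w$ in the eigenbasis of $A_k$, split $\lambda_i=\lambda_i^{(1+s)/2}\lambda_i^{(1-s)/2}$, and apply the finite-dimensional Cauchy--Schwarz inequality. If anything, you are more explicit than the paper, whose computation $(A_kv,w)_k=h_k^d\sum_i \lambda_i v_i w_i$ tacitly assumes the $(\cdot,\cdot)_k$-orthogonality of the eigenvectors that you justify via the self-adjointness and positive definiteness of $A_k$.
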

\begin{proof}
Considering the eigenvalue problem \eqref{eig} and the relation \eqref{base}, it follows that
$$A_kv = \sum_{i=1}^{n_k} v_iA_k\psi^k_i = \sum_{i=1}^{n_k} v_i\lambda_i\psi^k_i\qquad \forall v\in V_k.$$
From the definition \eqref{bilineark} of  $A_k$ and of the inner product \eqref{innerprod}, it follows 
$$\mathcal{A}_k(v,w)=(A_kv,w)_k = \h_k^d\sum_{i=1}^{n_k} v_i w_i\lambda_i=\h_k^d\sum_{i=1}^{n_k} v_i\lambda_i^{\frac{1+s}{2}} w_i\lambda_i^{\frac{1-s}{2}}.$$
The thesis follows applying the Cauchy-Schwarz inequality
\begin{align*}
(A_kv,w)_k = \h_k^d\sum_{i=1}^{n_k} v_i\lambda_i^{\frac{1+s}{2}} w_i\lambda_i^{\frac{1-s}{2}}&\leq \sqrt{\h_k^d\sum_{i=1}^{n_k} v_i^2\lambda_i^{1+s}}\sqrt{\h_k^d\sum_{j=1}^{n_k} w_j^2\lambda_j^{1-s}}=\ltrivert v \rtrivert_{s+1,k}\ltrivert w \rtrivert_{s-1,k}.
\end{align*}
\end{proof}\par

To establish an estimate for the error propagation operator, we follow a standard approach by separating two contributions: the {\it  smoothing property} and the {\it  approximation property}. The {\it  smoothing property} pertains only the smoothing scheme chosen for the multigrid algorithm.
\begin{lemma}[Smoothing property]
\label{lem:smooth1}
For any $v\in V_k$, it holds
\begin{equation}
\begin{aligned}
& \ltrivert G_k^mv\rtrivert_{s,k} \lesssim p_k^{2(s-t)}h_k^{t-s}(1+m)^{(t-s)/2}\ltrivert v \rtrivert_{t,k}, && 0\leq t\leq s\leq 2.
\end{aligned}
\label{smoothingprop1}
\end{equation}
\end{lemma}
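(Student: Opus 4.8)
The plan is to diagonalize everything in the eigenbasis $\{\psi_i^k\}$ of $A_k$, where both the smoother and the norms act by scalar multiplication. Writing $v=\sum_i v_i\psi_i^k$, the Richardson smoother gives $G_k=\textnormal{I}_k-B_k^{-1}A_k=\textnormal{I}_k-\Lambda_k^{-1}A_k$, so that $G_k^m\psi_i^k=(1-\lambda_i/\Lambda_k)^m\psi_i^k$. Using the definition \eqref{discrnorm}, the left-hand side of \eqref{smoothingprop1} becomes
\begin{equation*}
\ltrivert G_k^mv\rtrivert_{s,k}^2=\h_k^d\sum_{i=1}^{n_k}v_i^2\,\lambda_i^{\,s}\Bigl(1-\tfrac{\lambda_i}{\Lambda_k}\Bigr)^{2m}.
\end{equation*}
The goal is therefore to bound $\lambda_i^{\,s}(1-\lambda_i/\Lambda_k)^{2m}$ by a constant times $p_k^{4(s-t)}h_k^{2(t-s)}(1+m)^{t-s}\lambda_i^{\,t}$ uniformly in $i$, since matching that factor against $\lambda_i^{\,t}$ inside the sum would reproduce exactly $\ltrivert v\rtrivert_{t,k}^2$ on the right.

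First I would reduce the per-eigenvalue estimate to a scalar optimization. Writing $x=\lambda_i/\Lambda_k\in(0,1]$ (which lies in $(0,1]$ because $\Lambda_k$ bounds the spectral radius of $A_k$ by \eqref{Lambda}), the quantity to control is $\lambda_i^{\,s-t}(1-x)^{2m}=\Lambda_k^{\,s-t}x^{\,s-t}(1-x)^{2m}$. The elementary inequality $x^{a}(1-x)^{2m}\lesssim (1+m)^{-a}$ for $x\in(0,1]$ and $a=s-t\ge 0$ — proved by maximizing the smooth function $g(x)=x^{a}(1-x)^{2m}$ over $[0,1]$, whose critical point is $x^\ast=a/(a+2m)$ giving $g(x^\ast)\lesssim(1+m)^{-a}$ — is the analytic heart of the argument. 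Substituting this and the eigenvalue bound \eqref{Lambda}, namely $\Lambda_k\lesssim p_k^4/h_k^2$, yields
\begin{equation*}
\lambda_i^{\,s}\Bigl(1-\tfrac{\lambda_i}{\Lambda_k}\Bigr)^{2m}\lesssim\Lambda_k^{\,s-t}(1+m)^{-(s-t)}\lambda_i^{\,t}\lesssim p_k^{4(s-t)}h_k^{-2(s-t)}(1+m)^{-(s-t)}\lambda_i^{\,t},
\end{equation*}
valid for every index $i$ since $s-t\ge 0$.

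Finally I would reinsert this uniform bound into the spectral sum and factor the $i$-independent constant outside, leaving $\h_k^d\sum_i v_i^2\lambda_i^{\,t}=\ltrivert v\rtrivert_{t,k}^2$. Taking square roots converts the exponents $4(s-t)$, $-2(s-t)$, $-(s-t)$ into $2(s-t)$, $-(s-t)$, $-(s-t)/2$, matching \eqref{smoothingprop1} exactly. The main obstacle is the scalar maximization: one must argue the bound $g(x^\ast)\lesssim(1+m)^{-(s-t)}$ carefully and uniformly in $m$, and verify that the hidden constant depends only on $s-t$ (hence only on the fixed range $0\le t\le s\le 2$) and not on $m$, $k$, $h_k$, or $p_k$; the restriction $s\le 2$ and $a=s-t\ge 0$ is precisely what keeps $x^\ast\in[0,1]$ and the exponent nonnegative so the maximization is legitimate.
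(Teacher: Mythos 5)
Your proposal is correct and follows essentially the same route as the paper: expansion of $v$ in the eigenbasis of $A_k$, factoring out $\Lambda_k^{s-t}$, bounding $\max_{x\in[0,1]}x^{s-t}(1-x)^{2m}\lesssim(1+m)^{t-s}$, and inserting the spectral bound $\Lambda_k\lesssim p_k^4/h_k^2$ from \eqref{Lambda}. The only difference is cosmetic — you carry out the scalar maximization explicitly via the critical point $x^\ast=(s-t)/(s-t+2m)$, which the paper leaves implicit — so there is nothing to fix.
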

\begin{proof}
We refer again to the eigenvalue problem \eqref{eig} and write $v$ according to \eqref{base}:
\begin{equation*}
G_k^m v = (\textnormal{I}_k-\frac{1}{\Lambda_k}A_k)^mv = \sum_{i=1}^{n_k}(1-\frac{\lambda_i}{ \Lambda_k})^mv_i\psi_i^k.
\end{equation*}
The thesis follows using the above identity and estimate \eqref{Lambda}
\begin{align*}
\ltrivert G_k^m v \rtrivert_{s,k}^2 &= \h_k^d\sum_{i=1}^{n_k}\left(1-\frac{\lambda_i}{\Lambda_k}\right)^{2m}v_i^2\lambda_i^s=\Lambda_k^{s-t} \left\{\h_k^d\sum_{i=1}^{n_k}\left(1-\frac{\lambda_i}{\Lambda_k}\right)^{2m}\frac{\lambda_i^{s-t}}{\Lambda_k^{s-t}}v_i^2\lambda_i^t\right\}\\
&\leq \Lambda_k^{s-t} \max_{x\in[0,1]} \{x^{s-t}(1-x)^{2m}\}\ltrivert v\rtrivert_{t,k}^2\leq p_k^{4(s-t)}h_k^{2(t-s)} (1+m)^{t-s}\ltrivert v\rtrivert_{t,k}^2.
\end{align*}
\end{proof}\par
Following \cite[Lemma 4.2]{BrenZhao}, we now prove the {\it  approximation property}.
\begin{lemma}[Approximation property]
\label{lem:approx}
Let $\mu$ be defined as in Remark~\ref{rem:err}. Then,
\begin{equation}
\begin{aligned}
& \ltrivert (\textnormal{I}_k-R_{k-1}^kP_k^{k-1})v\rtrivert_{0,k}\lesssim \frac{\h_{k-1}^2}{p_{k-1}^{2-\mu}}\ltrivert v\rtrivert_{2,k}
&& \forall v \in V_k.
\end{aligned}
\label{approxprop}
\end{equation}
\end{lemma}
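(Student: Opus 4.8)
The plan is to prove the approximation property by an Aubin–Nitsche duality argument, exploiting the $L^2$ error estimates of Remark~\ref{rem:err} on the two consecutive levels $k$ and $k-1$. Throughout I identify the prolongation $R_{k-1}^k$ with the natural inclusion $V_{k-1}\hookrightarrow V_k$, so that, writing $u_{k-1}=P_k^{k-1}v\in V_{k-1}$, the quantity to be estimated is $\ltrivert v-u_{k-1}\rtrivert_{0,k}$, which by \eqref{equivL2} is equivalent to $\|v-u_{k-1}\|_{L^2(\Omega)}$. First I would reduce the estimate to a duality bound: for arbitrary $\phi\in L^2(\Omega)$, introduce the adjoint problem $-\Delta z=\phi$ in $\Omega$, $z=0$ on $\partial\Omega$. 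By the elliptic regularity assumption \eqref{elliptic} with $s=1$ one has $z\in H^2(\Omega)\cap H_0^1(\Omega)$ and $\|z\|_{H^2(\Omega)}\lesssim\|\phi\|_{L^2(\Omega)}$, and in particular $z\in H^2(\mathcal{T}_k)$ with equal norm since $z$ is globally $H^2$.

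Next I would bring in \emph{two} discrete approximations of $z$, one per level: let $z_k\in V_k$ and $z_{k-1}\in V_{k-1}$ solve $\mathcal{A}_k(z_k,\chi)=\int_\Omega\phi\chi\,dx$ for all $\chi\in V_k$ and $\mathcal{A}_{k-1}(z_{k-1},\chi)=\int_\Omega\phi\chi\,dx$ for all $\chi\in V_{k-1}$, respectively. Using a separate discrete adjoint on each level is precisely what accommodates the non-inherited nature of the forms $\mathcal{A}_k$ and $\mathcal{A}_{k-1}$. The crux is then the algebraic identity $\int_\Omega(v-u_{k-1})\phi\,dx=\mathcal{A}_k(v,z_k-z_{k-1})$. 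To establish it I would test each level's adjoint problem against the appropriate function and use symmetry of the DG forms: testing with $\chi=v\in V_k$ gives $\int_\Omega v\phi\,dx=\mathcal{A}_k(v,z_k)$, while testing with $\chi=u_{k-1}\in V_{k-1}$ gives $\int_\Omega u_{k-1}\phi\,dx=\mathcal{A}_{k-1}(u_{k-1},z_{k-1})$, which by the very definition \eqref{project} of $P_k^{k-1}$ (with $w=z_{k-1}$, recalling $R_{k-1}^kz_{k-1}=z_{k-1}$) equals $\mathcal{A}_k(v,z_{k-1})$; subtracting yields the claim.

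This identity is the step I expect to be the main obstacle, since it is where the definition of $P_k^{k-1}$, the Galerkin relations on both levels, and the symmetry of the two different bilinear forms must be combined correctly; once it is in place the rest is routine. Indeed, applying the generalized Cauchy–Schwarz inequality \eqref{Csgen} with $s=1$ gives $\mathcal{A}_k(v,z_k-z_{k-1})\le\ltrivert v\rtrivert_{2,k}\,\ltrivert z_k-z_{k-1}\rtrivert_{0,k}$, and by \eqref{equivL2} the last factor is controlled by $\|z_k-z_{k-1}\|_{L^2(\Omega)}$. A triangle inequality $\|z_k-z_{k-1}\|_{L^2(\Omega)}\le\|z-z_k\|_{L^2(\Omega)}+\|z-z_{k-1}\|_{L^2(\Omega)}$ together with the $L^2$ error estimate of Remark~\ref{rem:err} applied on each level (with $s=1$, so that the rate is $\h_\ell^2/p_\ell^{2-\mu}$) bounds this by $\big(\tfrac{\h_k^2}{p_k^{2-\mu}}+\tfrac{\h_{k-1}^2}{p_{k-1}^{2-\mu}}\big)\|z\|_{H^2(\Omega)}$.

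Finally, since $\h_k\le\h_{k-1}$ by \eqref{hest} and $p_{k-1}\le p_k$ by \eqref{pkpk1}, the coarse-level term dominates, so that this is $\lesssim\tfrac{\h_{k-1}^2}{p_{k-1}^{2-\mu}}\|z\|_{H^2(\Omega)}\lesssim\tfrac{\h_{k-1}^2}{p_{k-1}^{2-\mu}}\|\phi\|_{L^2(\Omega)}$. Collecting the estimates gives $\int_\Omega(v-u_{k-1})\phi\,dx\lesssim\tfrac{\h_{k-1}^2}{p_{k-1}^{2-\mu}}\,\ltrivert v\rtrivert_{2,k}\,\|\phi\|_{L^2(\Omega)}$; dividing by $\|\phi\|_{L^2(\Omega)}$ and taking the supremum over $\phi\in L^2(\Omega)$ in the duality characterization of the $L^2$ norm yields $\|v-u_{k-1}\|_{L^2(\Omega)}\lesssim\tfrac{\h_{k-1}^2}{p_{k-1}^{2-\mu}}\,\ltrivert v\rtrivert_{2,k}$, which is \eqref{approxprop} after reverting to $\ltrivert\cdot\rtrivert_{0,k}$ via \eqref{equivL2}.
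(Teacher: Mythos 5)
Your proposal is correct and follows essentially the same route as the paper's proof: an $L^2$ duality argument with discrete adjoint solutions on \emph{both} levels $k$ and $k-1$, the identity reducing the error to $\mathcal{A}_k(v,z_k-z_{k-1})$ via the definition \eqref{project} of $P_k^{k-1}$, the generalized Cauchy--Schwarz inequality \eqref{Csgen} with $s=1$, and the $hp$ error estimates \eqref{error_estimates} combined with \eqref{hest} and \eqref{pkpk1}. The only cosmetic difference is that you derive the key identity by testing each level's adjoint equation directly with $v$ and $P_k^{k-1}v$, whereas the paper first records the intermediate fact $\eta_{k-1}=P_k^{k-1}\eta_k$ and then manipulates; the two derivations are equivalent.
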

\begin{proof}
For any $v\in V_k$, applying \eqref{equivL2} and the duality formula for the $L^2$ norm, we obtain 
\begin{align}
\label{dualratio}
\ltrivert (\textnormal{I}_k-R_{k-1}^kP_k^{k-1})v\rtrivert_{0,k} &\lesssim \| (\textnormal{I}_k-R_{k-1}^kP_k^{k-1})v\|_{L^2(\Omega)}
&= \sup_{
\substack{
\phi \in L^2(\Omega)\\
\phi  \neq 0}} 
\frac{\int_{\Omega}\phi(\textnormal{I}_k-R_{k-1}^kP_k^{k-1})v\ dx}{\| \phi\|_{L^2(\Omega)}}.
\end{align}
Next, for $\phi\in L^2(\Omega)$, let $\eta\in H^2(\Omega)\cap  H^1_0(\Omega)$
be the solution to 
\begin{equation*}
\begin{aligned}
\int_{\Omega} \nabla \eta\cdot\nabla v\ dx &= \int_{\Omega}\phi v\ dx &&\forall v\in H^1_0(\Omega),
\end{aligned}
\end{equation*}
and let $\eta_k\in V_k$ and $\eta_{k-1}\in V_{k-1}$ be its 
DG approximations  in $V_k$ and $V_{k-1}$
\begin{equation}
\label{etak}
\begin{aligned}
 \mathcal{A}_k(\eta_k,v)&= \int_{\Omega}\phi v\ dx
&&\forall v\in V_k,\\
 \mathcal{A}_{k-1}(\eta_{k-1},v)&= \int_{\Omega}\phi v\ dx 
&&\forall v\in V_{k-1}.
\end{aligned}
\end{equation}
By \eqref{error_estimates}, assumption \eqref{elliptic}, the hypotheses \eqref{hest} and \eqref{pkpk1} we have
\begin{align}
\| \eta-\eta_k \|_{L^2(\Omega)}\lesssim \frac{\h_{k-1}^2}{p_{k-1}^{2-\mu}}\| \phi\|_{L^2(\Omega)},\quad\| \eta-\eta_{k-1} \|_{L^2(\Omega)}\lesssim \frac{\h_{k-1}^2}{p_{k-1}^{2-\mu}}\| \phi\|_{L^2(\Omega)}.\label{dualerrL2}
\end{align}
Moreover, if we consider the definion \eqref{project} of $P_{k}^{k-1}$  and \eqref{etak}, it holds that
$$\mathcal{A}_{k-1} (P_k^{k-1}\eta_k,w ) = \mathcal{A}_{k} (\eta_k,R^k_{k-1}w ) = \mathcal{A}_{k} (\eta_k,w ) = \phi(w) = \mathcal{A}_{k-1} (\eta_{k-1},w )\quad \forall w\in V_{k-1},$$
which implies 
\begin{equation}
\eta_{k-1} = P_k^{k-1}\eta_k.
\label{ekPek1}
\end{equation}
Now applying \eqref{etak}, the definition \eqref{project} of $P^{k-1}_k$, \eqref{ekPek1}, the Cauchy-Schwarz inequality \eqref{Csgen}, the $L^2$ norm equivalence \eqref{equivL2} and the error estimates \eqref{dualerrL2} we obtain
\begin{align*}
\int_\Omega \phi(\textnormal{I}_k-R_{k-1}^kP^{k-1}_k)v\ dx 
=& \mathcal{A}_k(\eta_k,v)-\mathcal{A}_k(\eta_k,R_{k-1}^kP^{k-1}_kv)\\
=& \mathcal{A}_k(\eta_k,v)-\mathcal{A}_{k-1}(P^{k-1}_k\eta_k,P^{k-1}_kv)\\
=& \mathcal{A}_k(\eta_k,v)-\mathcal{A}_{k-1}(\eta_{k-1},P^{k-1}_kv)
= \mathcal{A}_k(\eta_k-R_{k-1}^k\eta_{k-1},v)\\
\leq&\ltrivert \eta_k-\eta_{k-1} \rtrivert_{0,k}\ltrivert v \rtrivert_{2,k}
\lesssim \| \eta_k-\eta_{k-1} \|_{L^2(\Omega)}\ltrivert v \rtrivert_{2,k}\\
\leq & (\| \eta_k-\eta \|_{L^2(\Omega)}+\| \eta_{k-1}-\eta \|_{L^2(\Omega)})\ltrivert v \rtrivert_{2,k}\\
\lesssim &\frac{\h_{k-1}^2}{p_{k-1}^{2-\mu}}\| \phi \|_{L^2(\Omega)}\ltrivert v \rtrivert_{2,k}.
\end{align*}
The above estimate together with \eqref{dualratio} gives the desired inequality.
\end{proof}

Lemma~\ref{lem:smooth1} and Lemma~\ref{lem:approx} allow the convergence anaylsis of the two-level method, whose error propagation operator is given by
$$\mathbb{E}^{\textnormal{2lvl}}_{k,m_1,m_2} = G_k^{m_2}(\textnormal{I}_k-R_{k-1}^kP_k^{k-1})G_k^{m_1}.$$
\begin{theorem}
\label{thm: 2lvl}
There exists a  positive constant $C_{\textnormal{2lvl}}$ independent of the mesh size, the polynomial approximation degree and the level $k$, such that
$$\ltrivert\mathbb{E}^{\textnormal{2lvl}}_{k,m_1,m_2}v\rtrivert_{1,k} \leq C_{\textnormal{2lvl}}\Sigma_k \ltrivert v\rtrivert_{1,k}$$
for any $v\in V_k$, with 
\begin{align*}
\Sigma_k=\frac{p_{k}^{2+\mu}}{(1+m_1)^{1/2}(1+m_2)^{1/2}},
\end{align*}
and $\mu$ defined as in Remark~\ref{rem:err}.
Therefore, the two-level method converges provided the number of pre-smoothing and post-smoothing steps is chosen large enough.
\end{theorem}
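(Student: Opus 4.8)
The plan is to exploit the factorization of the error propagation operator into the three blocks $G_k^{m_2}$, $(\textnormal{I}_k-R_{k-1}^kP_k^{k-1})$ and $G_k^{m_1}$, and to estimate them one at a time by alternately invoking the smoothing property (Lemma~\ref{lem:smooth1}) and the approximation property (Lemma~\ref{lem:approx}). The guiding idea is that the approximation property bridges the gap between the $\ltrivert\cdot\rtrivert_{2,k}$ and the $\ltrivert\cdot\rtrivert_{0,k}$ norms, so I will use one smoothing factor on each side of the coarse correction to move from the target norm $\ltrivert\cdot\rtrivert_{1,k}$ to the two endpoints $\ltrivert\cdot\rtrivert_{0,k}$ and $\ltrivert\cdot\rtrivert_{2,k}$ that the approximation property requires.

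First I would treat the post-smoothing. Writing $w = G_k^{m_1}v$, I apply Lemma~\ref{lem:smooth1} with $s=1$ and $t=0$ (and $m=m_2$) to obtain
\begin{equation*}
\ltrivert G_k^{m_2}(\textnormal{I}_k-R_{k-1}^kP_k^{k-1})w\rtrivert_{1,k}\lesssim p_k^{2}\h_k^{-1}(1+m_2)^{-1/2}\ltrivert(\textnormal{I}_k-R_{k-1}^kP_k^{k-1})w\rtrivert_{0,k}.
\end{equation*}
Next, the approximation property (Lemma~\ref{lem:approx}) gives
\begin{equation*}
\ltrivert(\textnormal{I}_k-R_{k-1}^kP_k^{k-1})w\rtrivert_{0,k}\lesssim\frac{\h_{k-1}^2}{p_{k-1}^{2-\mu}}\ltrivert w\rtrivert_{2,k}.
\end{equation*}
Finally, recalling $w=G_k^{m_1}v$ and applying Lemma~\ref{lem:smooth1} once more, now with $s=2$ and $t=1$ (and $m=m_1$), yields
\begin{equation*}
\ltrivert w\rtrivert_{2,k}=\ltrivert G_k^{m_1}v\rtrivert_{2,k}\lesssim p_k^{2}\h_k^{-1}(1+m_1)^{-1/2}\ltrivert v\rtrivert_{1,k}.
\end{equation*}
The admissibility conditions $0\le t\le s\le2$ of Lemma~\ref{lem:smooth1} are met in both applications, which is precisely why one smoothing step on each side suffices.

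Chaining the three estimates, I obtain
\begin{equation*}
\ltrivert\mathbb{E}^{\textnormal{2lvl}}_{k,m_1,m_2}v\rtrivert_{1,k}\lesssim\frac{p_k^{4}}{p_{k-1}^{2-\mu}}\,\frac{\h_{k-1}^2}{\h_k^{2}}\,\frac{1}{(1+m_1)^{1/2}(1+m_2)^{1/2}}\,\ltrivert v\rtrivert_{1,k}.
\end{equation*}
It then remains to clean up the mesh- and degree-dependent prefactors. The mesh equivalence \eqref{hest} gives $\h_{k-1}^2/\h_k^2\lesssim1$, while the degree assumption \eqref{pkpk1} gives $p_{k-1}\approx p_k$, so that $p_k^{4}/p_{k-1}^{2-\mu}\lesssim p_k^{2+\mu}$. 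Collecting these bounds produces the factor $\Sigma_k=p_k^{2+\mu}/[(1+m_1)^{1/2}(1+m_2)^{1/2}]$, with a constant $C_{\textnormal{2lvl}}$ that is the product of the (level-, mesh- and degree-independent) hidden constants of Lemmas~\ref{lem:smooth1} and~\ref{lem:approx} together with those of \eqref{hest}--\eqref{pkpk1}; hence $C_{\textnormal{2lvl}}$ is itself independent of $k$, $\h_k$ and $p_k$. The convergence claim follows since, for fixed $p_k$, one can make $C_{\textnormal{2lvl}}\Sigma_k<1$ by taking $m_1,m_2$ large enough. No single step is genuinely hard here; the only thing demanding care is the bookkeeping of the $\h$- and $p$-powers and verifying that the growth $p_k^{2+\mu}$ is the sharp outcome of combining the $p_k^{2}$ loss in each smoothing estimate with the $p_{k-1}^{-(2-\mu)}$ gain from the approximation property — this is the point that makes the contraction factor deteriorate in $p$.
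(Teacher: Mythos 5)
Your proposal is correct and follows essentially the same route as the paper's own proof: both factor $\mathbb{E}^{\textnormal{2lvl}}_{k,m_1,m_2}=G_k^{m_2}(\textnormal{I}_k-R_{k-1}^kP_k^{k-1})G_k^{m_1}$ and chain the smoothing property with $(s,t)=(1,0)$ and $m=m_2$, the approximation property, and the smoothing property with $(s,t)=(2,1)$ and $m=m_1$, then absorb the prefactors via \eqref{hest} and \eqref{pkpk1} to arrive at $p_k^{2+\mu}(1+m_1)^{-1/2}(1+m_2)^{-1/2}$. Your accounting of the $\h$- and $p$-powers and the independence of $C_{\textnormal{2lvl}}$ match the paper exactly.
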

\begin{proof}
Exploiting the smoothing property \eqref{smoothingprop1}, approximation property \eqref{approxprop} and assumptions \eqref{hest} and \eqref{pkpk1}, we obtain
\begin{align*}
\ltrivert\mathbb{E}^{\textnormal{2lvl}}_{k,m_1,m_2}v\rtrivert_{1,k} &=\ltrivert G_k^{m_2}(\textnormal{I}_k-R_{k-1}^kP_k^{k-1})G_k^{m_1}v\rtrivert_{1,k}\\
&\lesssim \h_k^{-1}p_k^2(1+m_2)^{-1/2}\ltrivert (\textnormal{I}_k-R_{k-1}^kP_k^{k-1})G_k^{m_1}v\rtrivert_{0,k}\\
&\lesssim \h_kp_k^2 p_{k-1}^{\mu-2}(1+m_2)^{-1/2}\ltrivert G_k^{m_1}v\rtrivert_{2,k}\\
&\lesssim p_{k}^{2+\mu}(1+m_1)^{-1/2} (1+m_2)^{-1/2}\ltrivert v\rtrivert_{1,k},
\end{align*}
and the proof is complete.
\end{proof}
\begin{remark}
From Theorem \ref{thm: 2lvl} we have that the number of smoothing steps needed for convergence of the two level method increases with the polynomial approximation degree. Indeed,
let $\widehat{\mathsf{m}}_{\textnormal{2lvl}} \leq m_1+m_2$ to be chosen, it follows
$$C_{\textnormal{2lvl}} p_{k}^{2+\mu}(1+m_1)^{-1/2} (1+m_2)^{-1/2}\leq C_{\textnormal{2lvl}} p_{k}^{2+\mu}\widehat{\mathsf{m}}_{\textnormal{2lvl}}^{-1/2}.$$
The value of $\widehat{\mathsf{m}}_{\textnormal{2lvl}}$ is then chosen in such a way that 
$$C_{\textnormal{2lvl}} p_{k}^{2+\mu}\widehat{\mathsf{m}}_{\textnormal{2lvl}}^{-1/2}<1,$$
that is,
$$\widehat{\mathsf{m}}_{\textnormal{2lvl}}^{1/2}>C_{\textnormal{2lvl}} p_{k}^{2+\mu}.$$
\end{remark}\par
The next result regards the stability of the intergrid transfer operator $R_{k-1}^k$ and the operator $P_k^{k-1}$.
\begin{lemma}
\label{lem: RPstab}
There exists a positive constant $C_{\textnormal{stab}}$ independent of the mesh size, the polynomial approximation degree and the level $k$, such that
\begin{align}
\ltrivert R_{k-1}^k v\rtrivert_{1,k} &\leq C_{\textnormal{stab}}\ltrivert  v\rtrivert_{1,k-1}
&& \forall v\in V_{k-1},\label{Rstab}\\
\ltrivert P^{k-1}_k v\rtrivert_{1,k-1} &\leq C_{\textnormal{stab}}\ltrivert  v\rtrivert_{1,k}&&
\forall v\in V_{k}.\label{Pstab}
\end{align}
\end{lemma}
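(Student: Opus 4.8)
The plan is to treat the two estimates in sequence, proving \eqref{Rstab} first by a direct norm comparison and then bootstrapping \eqref{Pstab} from it through the defining relation of $P_k^{k-1}$ together with the Cauchy--Schwarz inequality of Lemma~\ref{lem:Csgen}.

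For \eqref{Rstab}, the key observation is that, since the spaces are nested ($V_{k-1}\subseteq V_k$), the prolongation $R_{k-1}^k$ is the natural injection, so that $R_{k-1}^k v = v$ as an element of $V_k$ for every $v\in V_{k-1}$; this is in fact implicitly used already in the proof of Lemma~\ref{lem:approx}, where the identity $\mathcal{A}_k(\eta_k, R_{k-1}^k w) = \mathcal{A}_k(\eta_k, w)$ appears. Recalling \eqref{norm1Ak}, I would write $\ltrivert R_{k-1}^k v\rtrivert_{1,k}^2 = \mathcal{A}_k(v,v)$ and then chain three estimates: continuity \eqref{contk} on level $k$ to pass to $\| v\|_{\textnormal{DG},k}^2$, the norm equivalence \eqref{DGequiv} to replace the level-$k$ DG norm by the level-$(k-1)$ one, and finally coercivity \eqref{coerck} on level $k-1$ to return to $\mathcal{A}_{k-1}(v,v) = \ltrivert v\rtrivert_{1,k-1}^2$. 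Collecting these yields \eqref{Rstab}, with $C_{\textnormal{stab}}$ equal to the product of the (uniform) continuity, coercivity, and norm-equivalence constants.

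For \eqref{Pstab}, I would use the definition \eqref{project} with the test function $w = P_k^{k-1}v$ to write
\[
\ltrivert P_k^{k-1}v\rtrivert_{1,k-1}^2 = \mathcal{A}_{k-1}(P_k^{k-1}v, P_k^{k-1}v) = \mathcal{A}_k(v, R_{k-1}^k P_k^{k-1}v).
\]
Applying Lemma~\ref{lem:Csgen} with $s=0$ bounds the right-hand side by $\ltrivert v\rtrivert_{1,k}\,\ltrivert R_{k-1}^k P_k^{k-1}v\rtrivert_{1,k}$, and the already-established stability \eqref{Rstab}, applied to $P_k^{k-1}v\in V_{k-1}$, bounds $\ltrivert R_{k-1}^k P_k^{k-1}v\rtrivert_{1,k}$ by $C_{\textnormal{stab}}\ltrivert P_k^{k-1}v\rtrivert_{1,k-1}$. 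After cancelling one factor of $\ltrivert P_k^{k-1}v\rtrivert_{1,k-1}$ (the estimate being trivial when this quantity vanishes), \eqref{Pstab} follows with the same constant.

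The only real subtlety, and the step I would watch most carefully, is the degree-uniformity of $C_{\textnormal{stab}}$. The norm equivalence \eqref{DGequiv} carries a factor $p_k/p_{k-1}$, so the independence of the constant on the polynomial degree hinges entirely on assumption \eqref{pkpk1}; without it the bound would degrade with the ratio of consecutive degrees. One must likewise require the continuity and coercivity constants in \eqref{contk}--\eqref{coerck} to be uniform across levels, which, for the SIPG and SIPG($\delta$) variants, amounts to choosing the penalty parameters $\alpha_k$ uniformly large. Everything else is a routine chaining of the inequalities recalled above.
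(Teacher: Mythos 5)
Your proposal is correct and follows essentially the same route as the paper: \eqref{Rstab} is proved by the identical chain of \eqref{norm1Ak}, continuity \eqref{contk}, the norm equivalence \eqref{DGequiv} (whose uniformity indeed rests on \eqref{pkpk1}, exactly as you flag), and coercivity \eqref{coerck}. For \eqref{Pstab} the paper writes the argument as a maximization of $\mathcal{A}_{k-1}(P_k^{k-1}v,u)/\ltrivert u\rtrivert_{1,k-1}$ over $u\in V_{k-1}\setminus\{0\}$, but this is the same computation as your direct choice $u=P_k^{k-1}v$ followed by cancellation, with identical ingredients: the definition \eqref{project}, the generalized Cauchy--Schwarz inequality \eqref{Csgen} with $s=0$, and the already-established bound \eqref{Rstab}.
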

\begin{proof}
We apply \eqref{norm1Ak}, the continuity bound \eqref{cont}, the relation \eqref{DGequiv} between the DG norms on different levels and the coercivity bound \eqref{coerck}
\begin{align*}
\ltrivert R_{k-1}^k v\rtrivert_{1,k}^2 &= \mathcal{A}_k(R_{k-1}^k v,R_{k-1}^k v)\lesssim \| R_{k-1}^k v\|_{\textnormal{DG},k}^2\lesssim\| v\|_{\textnormal{DG},k-1}^2\\
&\lesssim \mathcal{A}_{k-1}(v,v) = C_{\textnormal{stab}}^2\ltrivert v\rtrivert_{1,k-1}^2.
\end{align*} 
Inequality \eqref{Pstab} is obtained by the definition \eqref{project} of $P^{k-1}_k$, \eqref{norm1Ak} and \eqref{Rstab}
\begin{align*}
\ltrivert P^{k-1}_k v\rtrivert_{1,k-1} &= \max_{u\in V_{k-1}\setminus\{0\}}\frac{\mathcal{A}_{k-1}(P^{k-1}_k v,u)}{\ltrivert u\rtrivert_{1,k-1}}=\max_{u\in V_{k-1}\setminus\{0\}}\frac{\mathcal{A}_{k}(v,R_{k-1}^ku)}{\ltrivert u\rtrivert_{1,k-1}}\\&\leq C_{\textnormal{stab}}\frac{\ltrivert v\rtrivert_{1,k}\ltrivert u\rtrivert_{1,k-1}}{\ltrivert u\rtrivert_{1,k-1}}\leq C_{\textnormal{stab}}\ltrivert v\rtrivert_{1,k}.
\end{align*} 
\end{proof}\par
We are now ready to prove the main result of the paper concerning the convergence of the W-cycle multigrid method.
\begin{theorem}
\label{thm:final}
Let $\Sigma_k$ be defined as in Theorem~\ref{thm: 2lvl}. Then, there exist a constant $\widehat{\mathsf{C}}>C_{\textnormal{2lvl}}$ and an integer $\widehat{\mathsf{m}}_k$ independent of the mesh size, but dependent on the polynomial approximation degree, such that 
\begin{equation}
\label{final}
\ltrivert \mathbb{E}_{k,m_1,m_2}v\rtrivert_{1,k}\leq\widehat{\mathsf{C}}\Sigma_k\ltrivert v\rtrivert_{1,k}\quad \forall v\in V_k,
\end{equation}
provided $m_1+m_2\geq \widehat{\mathsf{m}}_k$.
\end{theorem}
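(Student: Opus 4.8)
The plan is to argue by induction on the level $k$, exploiting the recursive structure \eqref{defEk} of the error propagation operator; the base case $k=1$ is immediate since $\mathbb{E}_{1,m_1,m_2}=0$. The starting point of the inductive step is the algebraic identity obtained from \eqref{defEk} by splitting $\textnormal{I}_k-R_{k-1}^k(\textnormal{I}_k-\mathbb{E}_{k-1,m_1,m_2}^2)P_k^{k-1}=(\textnormal{I}_k-R_{k-1}^kP_k^{k-1})+R_{k-1}^k\mathbb{E}_{k-1,m_1,m_2}^2P_k^{k-1}$, namely
\begin{equation*}
\mathbb{E}_{k,m_1,m_2}=\mathbb{E}^{\textnormal{2lvl}}_{k,m_1,m_2}+G_k^{m_2}R_{k-1}^k\mathbb{E}_{k-1,m_1,m_2}^2P_k^{k-1}G_k^{m_1}.
\end{equation*}
This decomposes the W-cycle operator into the two-level operator, already controlled by Theorem~\ref{thm: 2lvl}, plus a correction term in which the coarse-level operator enters \emph{squared}; this squaring is precisely the feature that distinguishes the W-cycle from the V-cycle and that will let the induction close.

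To estimate the correction term in the $\ltrivert\cdot\rtrivert_{1,k}$ norm, I would work from the outside in. Since $\Lambda_k$ bounds $\lambda_{\max}(A_k)$, cf. \eqref{eigA}--\eqref{Lambda}, the eigenvalues $1-\lambda_i/\Lambda_k$ of $G_k$ lie in $[0,1]$, so $G_k$ is non-expansive in every norm $\ltrivert\cdot\rtrivert_{s,k}$; in particular $\ltrivert G_k^{m}w\rtrivert_{1,k}\le\ltrivert w\rtrivert_{1,k}$. Applying in turn the stability \eqref{Rstab} of $R_{k-1}^k$, the inductive hypothesis used \emph{twice} on $\mathbb{E}_{k-1,m_1,m_2}^2$, the stability \eqref{Pstab} of $P_k^{k-1}$, and once more the non-expansiveness of $G_k^{m_1}$, I obtain
\begin{equation*}
\ltrivert G_k^{m_2}R_{k-1}^k\mathbb{E}_{k-1,m_1,m_2}^2P_k^{k-1}G_k^{m_1}v\rtrivert_{1,k}\le C_{\textnormal{stab}}^2\,\widehat{\mathsf{C}}^2\,\Sigma_{k-1}^2\,\ltrivert v\rtrivert_{1,k}.
\end{equation*}
Combining this with the two-level bound of Theorem~\ref{thm: 2lvl} gives $\ltrivert\mathbb{E}_{k,m_1,m_2}v\rtrivert_{1,k}\le(C_{\textnormal{2lvl}}\Sigma_k+C_{\textnormal{stab}}^2\widehat{\mathsf{C}}^2\Sigma_{k-1}^2)\ltrivert v\rtrivert_{1,k}$, where the level-, mesh- and $p$-independent constants $C_{\textnormal{2lvl}}$ and $C_{\textnormal{stab}}$ absorb the hidden constants in the $\lesssim$ symbols.

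It then remains to close the induction, i.e. to guarantee $C_{\textnormal{2lvl}}\Sigma_k+C_{\textnormal{stab}}^2\widehat{\mathsf{C}}^2\Sigma_{k-1}^2\le\widehat{\mathsf{C}}\Sigma_k$. I would fix $\widehat{\mathsf{C}}$ once and for all with $\widehat{\mathsf{C}}>C_{\textnormal{2lvl}}$, say $\widehat{\mathsf{C}}=2C_{\textnormal{2lvl}}$, which reduces the requirement to $C_{\textnormal{stab}}^2\widehat{\mathsf{C}}^2\Sigma_{k-1}^2\le(\widehat{\mathsf{C}}-C_{\textnormal{2lvl}})\Sigma_k$. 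By \eqref{pkpk1} the degrees are non-decreasing in the level, $p_{k-1}\le p_k$, and the smoothing counts $m_1,m_2$ are common to all levels, so $\Sigma_{k-1}\le\Sigma_k$; it therefore suffices to enforce the smallness condition $\Sigma_k\le(\widehat{\mathsf{C}}-C_{\textnormal{2lvl}})/(C_{\textnormal{stab}}^2\widehat{\mathsf{C}}^2)$. Because $\Sigma_k=p_k^{2+\mu}/\big((1+m_1)^{1/2}(1+m_2)^{1/2}\big)\le p_k^{2+\mu}/(1+m_1+m_2)^{1/2}$ is independent of $h_k$ and tends to $0$ as $m_1+m_2\to\infty$ for fixed $p_k$, there exists an integer $\widehat{\mathsf{m}}_k$, independent of the mesh size but growing with $p_k$ (essentially like $p_k^{2(2+\mu)}$), so that the condition holds whenever $m_1+m_2\ge\widehat{\mathsf{m}}_k$; since $p_j\le p_k$ the analogous conditions at all coarser levels $j\le k$ are implied by this single one, which establishes \eqref{final}.

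The routine parts are the norm manipulations of the second paragraph, which chain together results already proved. The genuine difficulty, and the step I expect to be the main obstacle, is the closing of the induction with a constant $\widehat{\mathsf{C}}$ that is \emph{uniform in $k$}: one must verify that the quadratic term in $\Sigma_{k-1}$ can be absorbed into the linear term in $\Sigma_k$ on the right-hand side, and that the resulting threshold $\widehat{\mathsf{m}}_k$ can be taken independent of the mesh size and of the level. Tracking the $p$-dependence carefully here is exactly what produces the (unavoidable) deterioration of the contraction factor with $p$ recorded in $\Sigma_k$.
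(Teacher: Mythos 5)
Your proposal is correct and follows essentially the same route as the paper's proof: the same induction on $k$, the same splitting of \eqref{defEk} into the two-level operator plus the term $G_k^{m_2}R_{k-1}^k\mathbb{E}_{k-1,m_1,m_2}^2P_k^{k-1}G_k^{m_1}$, bounded via the non-expansiveness of $G_k$, the stability estimates \eqref{Rstab}--\eqref{Pstab}, and the inductive hypothesis, leading to the identical bound $(C_{\textnormal{2lvl}}\Sigma_k+C_{\textnormal{stab}}^2\widehat{\mathsf{C}}^2\Sigma_{k-1}^2)\ltrivert v\rtrivert_{1,k}$. The only (cosmetic) difference is in closing the induction: you absorb the quadratic term using $\Sigma_{k-1}\leq\Sigma_k$, yielding a threshold $\widehat{\mathsf{m}}_k\sim p_k^{2(2+\mu)}$, whereas the paper rewrites $\Sigma_{k-1}^2\leq\frac{p_{k-1}^{2+\mu}}{(1+m_1)^{1/2}(1+m_2)^{1/2}}\Sigma_k$ and obtains the condition \eqref{mbound} in terms of $p_{k-1}$, which is equivalent up to constants by \eqref{pkpk1}.
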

\begin{proof}
We follow the guidelines given in \cite[Theorem 4.6]{BrennCuiSu} and proceed by induction. For $k = 1$, \eqref{final} is trivially true. For $k>1$ we assume that \eqref{final} holds for $k-1$. By definition \eqref{defEk} we write $\mathbb{E}_{k,m_1,m_2}v$ as
$$\mathbb{E}_{k,m_1,m_2}v=G_k^{m_2}(\textnormal{I}_k-R_{k-1}^kP_k^{k-1})G_k^{m_1}v+G_k^{m_2}R_{k-1}^k\mathbb{E}_{k-1,m_1,m_2}^2P_k^{k-1}G_k^{m_1}v,$$
hence
$$\ltrivert \mathbb{E}_{k,m_1,m_2}v\rtrivert_{1,k}\leq \ltrivert \mathbb{E}^{\textnormal{2lvl}}_{k,m_1,m_2}v\rtrivert_{1,k}+\ltrivert G_k^{m_2}R_{k-1}^k\mathbb{E}_{k-1,m_1,m_2}^2P_k^{k-1}G_k^{m_1}v\rtrivert_{1,k}.$$
The first term can be bounded by Theorem~\ref{thm: 2lvl}
$$\ltrivert \mathbb{E}^{\textnormal{2lvl}}_{k,m_1,m_2} v\rtrivert_{1,k} \leq C_{\textnormal{2lvl}}\Sigma_k \ltrivert v\rtrivert_{1,k},$$
while the second term is estimated by applying the smoothing property \eqref{smoothingprop1}, the stability estimates \eqref{Rstab} and \eqref{Pstab} and the induction hypothesis
\begin{align*}
\ltrivert G_k^{m_2}R_{k-1}^k\mathbb{E}_{k-1,m_1,m_2}^2P_k^{k-1}G_k^{m_1}v\rtrivert_{1,k}\leq& \ltrivert R_{k-1}^k\mathbb{E}_{k-1,m_1,m_2}^2P_k^{k-1}G_k^{m_1}v\rtrivert_{1,k}\\
\leq&C_{\textnormal{stab}}\ltrivert \mathbb{E}_{k-1,m_1,m_2}^2P_k^{k-1}G_k^{m_1}v\rtrivert_{1,k}\\
\leq&C_{\textnormal{stab}}\widehat{\mathsf{C}}^2\Sigma_{k-1}^2\ltrivert P_k^{k-1}G_k^{m_1}v\rtrivert_{1,k}\\
\leq&C_{\textnormal{stab}}^2\widehat{\mathsf{C}}^2\Sigma_{k-1}^2\ltrivert G_k^{m_1}v\rtrivert_{1,k}\\
\leq&C_{\textnormal{stab}}^2\widehat{\mathsf{C}}^2\Sigma_{k-1}^2\ltrivert v\rtrivert_{1,k}.
\end{align*}
We then obtain
\begin{equation*}
\ltrivert \mathbb{E}_{k,m_1,m_2}v\rtrivert_{1,k}\leq\left(C_{\textnormal{2lvl}}\Sigma_k+C_{\textnormal{stab}}^2\widehat{\mathsf{C}}^2\Sigma_{k-1}^2\right)\ltrivert v\rtrivert_{1,k}.
\end{equation*}
By considering the definition of $\Sigma_k$ given in Theorem~\ref{thm: 2lvl} and \eqref{pkpk1}, we obtain
\begin{align*}
\Sigma_{k-1}^2=\frac{p_{k-1}^{4+2\mu}}{(1+m_1)(1+m_2)}&=\left(\frac{p_{k-1}}{p_{k}}\right)^{2+\mu}\frac{p_{k-1}^{2+\mu}}{(1+m_1)^{1/2}(1+m_2)^{1/2}}\Sigma_{k}\\&\leq\frac{p_{k-1}^{2+\mu}}{(1+m_1)^{1/2}(1+m_2)^{1/2}}\Sigma_{k}.
\end{align*}
Therefore,
\begin{align*}
C_{\textnormal{2lvl}}\Sigma_k+C_{\textnormal{stab}}^2\widehat{\mathsf{C}}^2\Sigma_{k-1}^2\leq \left(C_{\textnormal{2lvl}}+C_{\textnormal{stab}}^2\widehat{\mathsf{C}}^2\frac{p_{k-1}^{2+\mu}}{(1+m_1)^{1/2}(1+m_2)^{1/2}}\right)\Sigma_k.
\end{align*}
We now introduce $\widehat{\mathsf{m}}_k\leq m_1+m_2$, to be chosen later; then it holds
\begin{align*}
C_{\textnormal{2lvl}}+C_{\textnormal{stab}}^2\widehat{\mathsf{C}}^2\frac{p_{k-1}^{2+\mu}}{(1+m_1)^{1/2}(1+m_2)^{1/2}}\leq& C_{\textnormal{2lvl}}+C_{\textnormal{stab}}^2\widehat{\mathsf{C}}^2\frac{p_{k-1}^{2+\mu}}{\widehat{\mathsf{m}}_k^{1/2}}.
\end{align*}
Choosing \begin{equation}
\label{mbound}
\widehat{\mathsf{m}}_k^{1/2} \geq p_{k-1}^{2+\mu}\frac{C_{\textnormal{stab}}^2\widehat{\mathsf{C}}^2}{\widehat{\mathsf{C}}-C_{\textnormal{2lvl}}},
\end{equation}
we obtain
\begin{align*}
C_{\textnormal{2lvl}}\Sigma_k+C_{\textnormal{stab}}^2\widehat{\mathsf{C}}^2\Sigma_{k-1}^2\leq \widehat{\mathsf{C}}\Sigma_k,
\end{align*}
and inequality \eqref{final} follows.
\end{proof}
\section{Extension to other symmetric DG schemes}\label{sec.extesion}

If we restrict to the case of meshes of $d$-parallelepipeds, our analysis can be extended to the methods introduced by Bassi et al. \cite{Basetal} and Brezzi et al. \cite{BreMan} whose bilinear forms can be written as
\begin{multline}
\label{bilinearK2}
\mathcal{A}_K(w,v) = \sum_{T\in\mathcal{T}_K}\int_T \nabla w \cdot \nabla v\ dx+\sum_{T\in\mathcal{T}_K}\int_T \nabla w \cdot \mathcal{R}_K(\llbracket v \rrbracket)\ dx
+\sum_{T\in\mathcal{T}_K}\int_T \mathcal{R}_K(\llbracket w \rrbracket)\cdot\nabla v\ dx\\
+\theta\int_\Omega \mathcal{R}_K(\llbracket w \rrbracket)\cdot\mathcal{R}_K(\llbracket v \rrbracket) dx+\sum_{T\in\mathcal{T}_K}\alpha_K\int_T r_F(\llbracket w\rrbracket)r_F(\llbracket v\rrbracket)\ dx,
\end{multline}
where $\theta = 0,1$ for Bassi et al. \cite{Basetal} and Brezzi et al. methods \cite{BreMan}, repectively.
Indeed, we can prove continuity and coercivity with respect to the DG norm \eqref{DGnorm} using standard techniques and
the following result, cf. \cite{ShotzSchwTos02} for the proof.
\begin{lemma}
\label{lem:restinv}
For any $v\in V_K$ and for any $F \in \mathcal{F}_K$ it holds
\begin{align}
\alpha_K\|r_F(\llbracket v \rrbracket)\|^2_{L^2(\Omega)}\lesssim\|\sqrt{\sigma_K}\llbracket v \rrbracket\|^2_{{L^2}(F)}\lesssim \alpha_K\|r_F(\llbracket v \rrbracket)\|^2_{L^2(\Omega)}.\label{restinv}
\end{align}
\end{lemma}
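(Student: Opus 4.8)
The plan is to first reduce \eqref{restinv} to the scaled equivalence
\begin{equation*}
\|r_F(\llbracket v \rrbracket)\|^2_{L^2(\Omega)} \approx \frac{p_K^2}{h_F}\,\|\llbracket v \rrbracket\|^2_{L^2(F)},
\end{equation*}
where $h_F$ denotes the diameter of an element adjacent to $F$. Indeed, by the choice \eqref{eq:penalty} of the penalty one has $\sigma_K|_F \approx \alpha_K p_K^2/h_F$, so the factors $\alpha_K$ and $\sigma_K$ appearing in \eqref{restinv} may be absorbed once this equivalence is established. Two structural facts will be used repeatedly. First, $r_F(\llbracket v \rrbracket)$ is supported only on the (one or two) elements sharing $F$, since testing its definition against any $\boldsymbol{\eta}\in[V_K]^d$ vanishing on those elements makes the right-hand side vanish. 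Second, by the Riesz representation underlying the definition of $r_F$,
\begin{equation*}
\|r_F(\llbracket v \rrbracket)\|_{L^2(\Omega)} = \sup_{\boldsymbol{\eta}\in [V_K]^d\setminus\{0\}} \frac{\left|\int_F \llbracket v \rrbracket\cdot\llaverage\boldsymbol{\eta}\rraverage\ ds\right|}{\|\boldsymbol{\eta}\|_{L^2(\Omega)}}.
\end{equation*}

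For the first inequality in \eqref{restinv} (the upper bound on the lifting) I would test the defining relation of $r_F$ against $\boldsymbol{\eta}=r_F(\llbracket v \rrbracket)$ itself, apply the Cauchy--Schwarz inequality on $F$, and then invoke the $hp$-inverse trace inequality $\|\llaverage\boldsymbol{\eta}\rraverage\|_{L^2(F)}\lesssim p_K h_F^{-1/2}\|\boldsymbol{\eta}\|_{L^2(\omega_F)}$, with $\omega_F$ the union of elements abutting $F$. This yields
\begin{equation*}
\|r_F(\llbracket v \rrbracket)\|^2_{L^2(\Omega)} \lesssim \|\llbracket v \rrbracket\|_{L^2(F)}\,\frac{p_K}{h_F^{1/2}}\,\|r_F(\llbracket v \rrbracket)\|_{L^2(\Omega)},
\end{equation*}
and dividing gives $\|r_F(\llbracket v \rrbracket)\|^2_{L^2(\Omega)}\lesssim (p_K^2/h_F)\|\llbracket v \rrbracket\|^2_{L^2(F)}$, which is the first bound.

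The reverse inequality is the heart of the matter: it amounts to the \emph{lower} bound on $\|r_F(\llbracket v \rrbracket)\|_{L^2(\Omega)}$, and I would obtain it by exhibiting a good competitor in the supremum above. Using the tensor-product structure of the reference hypercube, I would build a polynomial $\boldsymbol{\eta}\in[V_K]^d$ supported on a single element adjacent to $F$ whose average trace $\llaverage\boldsymbol{\eta}\rraverage$ on $F$ equals $\llbracket v \rrbracket$ (the harmless factor $1/2$ from the averaging being absorbed into the construction), and which concentrates near $F$ so that
\begin{equation*}
\|\boldsymbol{\eta}\|_{L^2(\omega_F)}\lesssim \frac{h_F^{1/2}}{p_K}\,\|\llbracket v \rrbracket\|_{L^2(F)}.
\end{equation*}
The one-dimensional building block is the degree-$p_K$ polynomial behaving like $(1-t)^{p_K}$ on the reference interval, which takes the value $1$ at the face node while having $L^2$-norm of order $p_K^{-1/2}$; tensorizing against $\llbracket v \rrbracket$ on $F$ produces $\boldsymbol{\eta}$ with the stated properties, after the usual affine scaling to an element of size $h_F$. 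Inserting this $\boldsymbol{\eta}$ into the Riesz formula makes the numerator equal to $\|\llbracket v \rrbracket\|^2_{L^2(F)}$ and the denominator at most $(h_F^{1/2}/p_K)\|\llbracket v \rrbracket\|_{L^2(F)}$, whence $\|r_F(\llbracket v \rrbracket)\|_{L^2(\Omega)}\gtrsim (p_K/h_F^{1/2})\|\llbracket v \rrbracket\|_{L^2(F)}$, i.e. the second bound in \eqref{restinv}.

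The main obstacle is precisely this last construction: one needs the $hp$-inverse trace inequality to be \emph{sharp}, in the sense that its extremal rate $p_K h_F^{-1/2}$ is actually attained by an explicit polynomial extension with controlled $L^2$-mass. The concentrating extension above supplies such an extremizer, and the tensor-product setting (meshes of $d$-parallelepipeds) is exactly what makes the one-dimensional estimate lift to $d$ dimensions; this is the content of the cited result \cite{ShotzSchwTos02}.
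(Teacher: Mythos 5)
The paper itself gives no proof of this lemma---it is quoted from \cite{ShotzSchwTos02}---so your attempt can only be judged on its own merits and against the known argument in that reference, whose strategy (localization of the support of $r_F$, the Riesz sup-representation of $\|r_F(\llbracket v\rrbracket)\|_{L^2(\Omega)}$, the $hp$-inverse trace inequality for the first bound, and a tensorized concentrating extension for the second, with the tensor-product structure being exactly what restricts the result to $d$-parallelepipeds) you have correctly reproduced in outline. The upper-bound direction is fine as written. The reduction via $\sigma_K|_F\simeq\alpha_K p_K^2/h_F$ is also fine.

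The genuine gap is in the one-dimensional building block for the lower bound, and it is quantitative but fatal to the stated rate: $\|(1-t)^{p_K}\|_{L^2(0,1)}=(2p_K+1)^{-1/2}\simeq p_K^{-1/2}$, so the tensorized competitor built from it satisfies only $\|\boldsymbol{\eta}\|_{L^2(\omega_F)}\lesssim (h_F^{1/2}/p_K^{1/2})\,\|\llbracket v\rrbracket\|_{L^2(F)}$, not the $(h_F^{1/2}/p_K)$ bound you assert two lines earlier; inserted into the Riesz quotient this yields $\|r_F(\llbracket v\rrbracket)\|^2_{L^2(\Omega)}\gtrsim (p_K/h_F)\|\llbracket v\rrbracket\|^2_{L^2(F)}$, which is a factor $p_K$ short of matching the penalty scaling $\alpha_K p_K^2/h_F$ in \eqref{eq:penalty}---it would only prove the lemma for a penalty $\sim \alpha_K p_K/h$. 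The correct block is the \emph{minimal $L^2$-norm} extension: on $[-1,1]$, among $\psi\in\mathcal{P}_{p}$ with $\psi(1)=1$, expanding in Legendre polynomials ($P_j(1)=1$, $\|P_j\|_{L^2(-1,1)}^2=2/(2j+1)$) and minimizing by Lagrange/Cauchy--Schwarz gives
\begin{equation}
\min\|\psi\|_{L^2(-1,1)}^2=\Bigl(\sum_{j=0}^{p}\tfrac{2j+1}{2}\Bigr)^{-1}=\frac{2}{(p+1)^2},
\end{equation}
attained by the normalized reproducing kernel $\psi^*(t)=\frac{2}{(p+1)^2}\sum_{j=0}^{p}\frac{2j+1}{2}P_j(t)$, whose norm is $\simeq p^{-1}$. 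Replacing $(1-t)^{p_K}$ by $\psi^*$ makes your tensorization deliver the claimed $\|\boldsymbol{\eta}\|\lesssim (h_F^{1/2}/p_K)\|\llbracket v\rrbracket\|_{L^2(F)}$ and closes the argument at the sharp rate. Relatedly, your closing remark that $(1-t)^{p_K}$ realizes the extremal rate of the $hp$ trace inequality is false: its endpoint-value-to-$L^2$-norm ratio is $\simeq p_K^{1/2}$, whereas the sharp constant $\simeq p_K$ is attained precisely by the kernel $\psi^*$.
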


\section{W-cycle algorithms with inherited bilinear forms}\label{sec:inehrited}

In Section~\ref{convergence} we have followed the classical approach in the framework of multigrid algorithms for DG methods \cite{GopKan03,BrenZhao,BrennCuiSu,BrenCuGuSu}, where the bilinear forms are assembled on each sublevel. We now consider inherited bilinear forms, that is, the sublevel solvers $\mathcal{A}_{k}^{\textnormal{R}} (\cdot,\cdot)$ are obtained as the restriction of the original bilinear form $\mathcal{A}_{K}(\cdot,\cdot)$:
\begin{equation}\label{bilinearR}
\begin{aligned}
&
\mathcal{A}_{k}^{\textnormal{R}} (v,w) = \mathcal{A}_{K} (R^{K}_{k}v,R^{K}_{k}w)
&& \forall v,w\in V_{k}
&& \forall k=1,2,\ldots, K-1.
\end{aligned}
\end{equation}
For $k = 1,\dots,K-1$, the prolongation operators are defined as $R^{K}_{k} = R^{K}_{K-1}R^{K-1}_{K-2}\cdots R^{k+1}_{k}$,  where $R^{k+1}_{k}$ is defined as before. The associated operator $A^\textnormal{R}_k$, given by \eqref{defAk1}, can be computed as
$$A^\textnormal{R}_k = R^k_KA_KR_k^K.$$
Using the new definition of the sublevel solvers, it is easy to see that coercivity estimate remains unchanged, {\it i.e.}, 
$\mathcal{A}^{\textnormal{R}}_k(u,u) \gtrsim \| u \|_{\textnormal{DG},k}^2$ for all $u\in V_k$, 
whereas the  continuity  bound \eqref{cont}  modifies as follows
\begin{equation}\label{contR}
\begin{aligned}
\mathcal{A}^{\textnormal{R}}_k(u,v)&
\lesssim \| u \|_{\textnormal{DG},K}\| v \|_{\textnormal{DG},K}\lesssim \frac{p_K^2}{p_k^2}\frac{h_k}{h_K} \| u \|_{\textnormal{DG},k}\| v \|_{\textnormal{DG},k} &&\forall u,v\in V(\h_k).
\end{aligned}
\end{equation}
The major effect of the above bound regards the estimate \eqref{eigA}, which now becomes
\begin{equation}
\label{eigAR}
\lambda_{max}(A^\textnormal{R}_k)\lesssim \frac{p_K^2p_k^2}{h_Kh_k}.
\end{equation}
Indeed, using the continuity bound \eqref{contR} and estimating separately the contributions of the DG norm $\| \cdot \|_{DG,k}$ we have
\begin{align}
\sum_{T\in\mathcal{T}_{k}} 
\|\nabla u\|^2_{L^2(T)} 
= \sum_{T\in\mathcal{T}_{k}} | u|^2_{H^1(T)}\lesssim \frac{p_{k}^4}{h_{k}^{2}}\sum_{T\in\mathcal{T}_{k}} \| u\|^2_{L^2(T)}
=\frac{p_{k}^4}{h_{k}^{2}} \| u\|^2_{L^2(\Omega)},\\
\sum_{F\in\mathcal{F}_{k}} \| \sigma^{1/2}_{k}\llbracket u\rrbracket\|^2_{L^2(F)}
\lesssim \frac{p_{k}^2}{h_{k}}\sum_{F\in\mathcal{F}_{k}} \| \llbracket u\rrbracket\|^2_{L^2(F)}\lesssim\frac{p_{k}^4}{h_{k}^2} \| u\|^2_{L^2(\Omega)},
\end{align}
where we have also used an inverse inequality and a trace inequality. We then consider the Richardson smoothing scheme with 
\begin{equation}
B^\textnormal{R}_k = \Lambda^\textnormal{R}_k\textnormal{I}_k,
\end{equation}
where, by \eqref{eigAR},  $\Lambda^\textnormal{R}_k\in\mathbb{R}$ is such that
\begin{equation}
\label{LambdaR}
\Lambda_k^R \lesssim \frac{p_K^2p_k^2}{h_Kh_k}.
\end{equation}
We then follow the lines of Section~\ref{mulgr} and Section~\ref{convergence} to define the W-cycle algorithm and analyze its convergence, replacing $A_k$ with $A_k^\textnormal{R}$. We will show that in this case convergence cannot be uniform since it depends on the number of levels. The approximation property of Lemma~\ref{lem:approx} remains trivially true whereas the smoothing operator $G_k$ has to be defined considering $B^\textnormal{R}_k$ instead of $B_k$. As a consequence, the following new 
smoothing property can be proved reasoning as in the proof of Lemma~\ref{lem:smooth1}.
\begin{lemma}
\label{lem:smoothR}
 For $0\leq t\leq s\leq 2$, it holds
\begin{equation}
\begin{aligned}
\ltrivert G_k^mv\rtrivert_{s,k} \lesssim (p_Kp_k)^{(s-t)}(h_Kh_k)^{(t-s)/2}(1+m)^{(t-s)/2}\ltrivert v \rtrivert_{t,k}
&& \forall\, v\in V_k. 
\label{smoothingpropR}
\end{aligned}
\end{equation}
\end{lemma}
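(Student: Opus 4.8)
The plan is to mirror the proof of Lemma~\ref{lem:smooth1} line by line, the only change being that the single-level spectral bound \eqref{Lambda} is replaced throughout by the inherited bound \eqref{LambdaR}. First I would invoke the eigenvalue problem for the inherited operator, $A_k^\textnormal{R}\psi_i^k = \lambda_i\psi_i^k$ (with $\lambda_i>0$ thanks to the coercivity of $\mathcal{A}_k^\textnormal{R}(\cdot,\cdot)$ recorded above), and expand $v=\sum_{i=1}^{n_k}v_i\psi_i^k$ in the associated orthogonal eigenbasis. Since $G_k=\textnormal{I}_k-(B_k^\textnormal{R})^{-1}A_k^\textnormal{R}$ with $B_k^\textnormal{R}=\Lambda_k^\textnormal{R}\textnormal{I}_k$, the smoothing iterate is diagonal in this basis:
\[
G_k^m v=\Big(\textnormal{I}_k-\tfrac{1}{\Lambda_k^\textnormal{R}}A_k^\textnormal{R}\Big)^m v=\sum_{i=1}^{n_k}\Big(1-\tfrac{\lambda_i}{\Lambda_k^\textnormal{R}}\Big)^m v_i\psi_i^k .
\]

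Next I would compute the discrete norm and factor out a power of $\Lambda_k^\textnormal{R}$, as in Lemma~\ref{lem:smooth1}:
\[
\ltrivert G_k^m v\rtrivert_{s,k}^2=\h_k^d\sum_{i=1}^{n_k}\Big(1-\tfrac{\lambda_i}{\Lambda_k^\textnormal{R}}\Big)^{2m}v_i^2\lambda_i^s=(\Lambda_k^\textnormal{R})^{s-t}\,\h_k^d\sum_{i=1}^{n_k}\Big(1-\tfrac{\lambda_i}{\Lambda_k^\textnormal{R}}\Big)^{2m}\Big(\tfrac{\lambda_i}{\Lambda_k^\textnormal{R}}\Big)^{s-t}v_i^2\lambda_i^t .
\]
Because $0<\lambda_i\le\Lambda_k^\textnormal{R}$, each ratio $x_i:=\lambda_i/\Lambda_k^\textnormal{R}$ lies in $[0,1]$, so the sum is bounded by $\max_{x\in[0,1]}\{x^{s-t}(1-x)^{2m}\}\,\ltrivert v\rtrivert_{t,k}^2$.

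The only genuinely analytic ingredient is the elementary estimate
\[
\max_{x\in[0,1]}\{x^{s-t}(1-x)^{2m}\}\lesssim (1+m)^{-(s-t)},\qquad 0\le t\le s\le 2,
\]
whose maximiser is $x=(s-t)/(s-t+2m)$ and whose hidden constant is bounded since $s-t\le 2$. Inserting this together with the inherited spectral estimate $\Lambda_k^\textnormal{R}\lesssim p_K^2p_k^2/(h_Kh_k)$ from \eqref{LambdaR} gives
\[
\ltrivert G_k^m v\rtrivert_{s,k}^2\lesssim\Big(\tfrac{p_K^2p_k^2}{h_Kh_k}\Big)^{s-t}(1+m)^{-(s-t)}\ltrivert v\rtrivert_{t,k}^2,
\]
and taking square roots yields exactly \eqref{smoothingpropR}.

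I do not expect a serious obstacle: the argument is structurally identical to Lemma~\ref{lem:smooth1}, and the substantive difference is pure bookkeeping — carrying the \emph{product} form $p_K^2p_k^2/(h_Kh_k)$ of $\Lambda_k^\textnormal{R}$ through the exponent $s-t$ in place of the single-level $p_k^4/h_k^2$. The one point worth stating explicitly is the positivity of the spectrum of $A_k^\textnormal{R}$, which guarantees both that $x_i\in[0,1]$ (so the maximisation is legitimate) and that the norms $\ltrivert\cdot\rtrivert_{s,k}$ are well defined; this is exactly the coercivity of the restricted bilinear form noted prior to the lemma.
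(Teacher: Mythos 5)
Your proposal is correct and coincides with the paper's own argument: the paper proves Lemma~\ref{lem:smoothR} exactly by repeating the eigenexpansion proof of Lemma~\ref{lem:smooth1} verbatim, with the single-level spectral bound \eqref{Lambda} replaced by the inherited bound \eqref{LambdaR}, which is precisely your bookkeeping with $(\Lambda_k^\textnormal{R})^{s-t}$ and the elementary maximisation of $x^{s-t}(1-x)^{2m}$ on $[0,1]$. Your explicit remarks on the positivity of the spectrum of $A_k^\textnormal{R}$ (so that $\lambda_i/\Lambda_k^\textnormal{R}\in(0,1]$ and the norms $\ltrivert\cdot\rtrivert_{s,k}$ built from $A_k^\textnormal{R}$ are well defined) are consistent with the coercivity noted in Section~\ref{sec:inehrited} and fill in the only detail the paper leaves implicit.
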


\par
Regarding the convergence of the two-level method, estimate \eqref{smoothingpropR} introduces a dependence on the number of levels, as shown in the next result.
\begin{theorem}
\label{thm: 2lvlR}
There exists a  positive constant $C_{\textnormal{2lvl}}^\textnormal{R}$ independent of the mesh size, the polynomial approximation degree and the level $k$, such that
$$\ltrivert\mathbb{E}^{\textnormal{2lvl}}_{k,m_1,m_2}v\rtrivert_{1,k} \leq C_{\textnormal{2lvl}}^\textnormal{R}\Sigma_k^\textnormal{R} \ltrivert v\rtrivert_{1,k}$$
for any $v\in V_k$, with 
\begin{align}
\label{SigmaR}
\Sigma_k^\textnormal{R}=2^{K-k}\frac{p_K^{2}p_{k}^\mu}{(1+m_1)^{1/2}(1+m_2)^{1/2}},
\end{align}
and $\mu$ defined as in Remark~\ref{rem:err}.
\end{theorem}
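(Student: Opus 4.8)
The plan is to mimic the proof of Theorem~\ref{thm: 2lvl} almost verbatim, the only substantive change being that the smoothing property \eqref{smoothingprop1} is replaced by its inherited counterpart \eqref{smoothingpropR}, while the approximation property \eqref{approxprop} and the auxiliary estimates \eqref{hest}, \eqref{pkpk1} are invoked unchanged (recall that \eqref{approxprop} remains valid in the inherited setting, as noted in Section~\ref{sec:inehrited}). First I would factor the action of $\mathbb{E}^{\textnormal{2lvl}}_{k,m_1,m_2}$ into the post-smoothing step $G_k^{m_2}$, the coarse-grid correction $\textnormal{I}_k-R_{k-1}^kP_k^{k-1}$, and the pre-smoothing step $G_k^{m_1}$, and bound the three pieces successively.

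The three steps are as follows. Step one: apply \eqref{smoothingpropR} with $s=1$, $t=0$ to pull out the outer $G_k^{m_2}$, producing a factor $(p_Kp_k)(h_Kh_k)^{-1/2}(1+m_2)^{-1/2}$ multiplying $\ltrivert (\textnormal{I}_k-R_{k-1}^kP_k^{k-1})G_k^{m_1}v\rtrivert_{0,k}$. Step two: invoke the approximation property \eqref{approxprop}, which contributes the factor $\h_{k-1}^2/p_{k-1}^{2-\mu}$ and raises the Sobolev index from $0$ to $2$. Step three: apply \eqref{smoothingpropR} once more, now with $s=2$, $t=1$, to the remaining $G_k^{m_1}$, yielding a second factor $(p_Kp_k)(h_Kh_k)^{-1/2}(1+m_1)^{-1/2}$ and returning us to $\ltrivert v\rtrivert_{1,k}$.

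What remains is routine bookkeeping of the accumulated constants. The polynomial degrees combine, via \eqref{pkpk1}, into $p_K^2p_k^\mu$ since $p_k^2/p_{k-1}^{2-\mu}\sim p_k^\mu$; the mesh sizes, after using $\h_{k-1}\lesssim\h_k$ from \eqref{hest}, reduce to $\h_k^2/(h_Kh_k)=h_k/h_K$. The decisive observation---and the genuine source of the level-dependence that is absent from Theorem~\ref{thm: 2lvl}---is that on the uniformly refined hierarchy one has $h_k=h_12^{1-k}$, so that $h_k/h_K=2^{K-k}$, which is precisely the prefactor appearing in $\Sigma_k^\textnormal{R}$.

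I expect the main subtlety, more than a true obstacle, to be tracking the finest-level quantities $p_K$ and $h_K$ that the inherited smoothing property \eqref{smoothingpropR} drags into every smoothing estimate. In the non-inherited case each application of $G_k$ with a unit index jump produced a factor $h_k^{-1}$, so the two smoothing steps contributed $h_k^{-2}$, which cancelled exactly against the $\h_{k-1}^2\sim\h_k^2$ of the approximation property and left an $h$-uniform bound. Here, by contrast, each smoothing step contributes only $(h_Kh_k)^{-1/2}$, so after cancellation the uncompensated ratio $h_k/h_K$ survives. Recognizing that this ratio equals $2^{K-k}$, and is therefore \emph{not} a bounded quantity, is what demonstrates that the two-level contraction factor, and hence any W-cycle built upon it, cannot be rendered uniform in the number of levels.
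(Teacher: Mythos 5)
Your proposal is correct and takes essentially the same route the paper intends: Theorem~\ref{thm: 2lvlR} is stated without a written proof, the paper indicating only that one repeats the argument of Theorem~\ref{thm: 2lvl} with the inherited smoothing property \eqref{smoothingpropR} replacing \eqref{smoothingprop1} and the approximation property \eqref{approxprop} unchanged, which is exactly your three-step decomposition. Your bookkeeping, namely $h_{k-1}^2(h_Kh_k)^{-1}\lesssim h_k/h_K = 2^{K-k}$ via \eqref{hest} and $p_K^2p_k^2/p_{k-1}^{2-\mu}\lesssim p_K^2p_k^{\mu}$ via \eqref{pkpk1}, reproduces $\Sigma_k^{\textnormal{R}}$ exactly, including the correct identification of the surviving ratio $h_k/h_K$ as the source of the level dependence.
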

\par
We observe that the term $2^{K-k}$ in \eqref{SigmaR} is due to the refinement process described in Section~\ref{mulgr}, which implies $\h_{k} = 2^{K-k}\h_K$.
We finally observe that from the definition \eqref{bilinearR},  the stability estimates \eqref{Rstab} and \eqref{Pstab} reduce to
\begin{equation}
\ltrivert R_{k-1}^k v\rtrivert_{1,k} =\ltrivert  v\rtrivert_{1,k-1}
\quad\forall v\in V_{k-1},\qquad
\ltrivert P^{k-1}_k v\rtrivert_{1,k-1} \leq\ltrivert  v\rtrivert_{1,k}
\quad\forall v\in V_{k},
\end{equation}
thus resulting in the following  convergence estimate for the W-cycle algorithm.
\begin{theorem}
\label{thm:finalR}
Let $\Sigma_k^\textnormal{R}$ be defined as in Theorem~\ref{thm: 2lvlR}. Then, there exist a constant $\mathsf{C}^\textnormal{R}>C_{\textnormal{2lvl}}^\textnormal{R}$ and an integer $\mathsf{m}_k^\textnormal{R}$ independent of the mesh size, but dependent on the polynomial approximation degree and the level $k$, such that 
\begin{equation}
\label{finalR}
\ltrivert \mathbb{E}_{k,m_1,m_2}v\rtrivert_{1,k}\leq\mathsf{C}^\textnormal{R}\Sigma_k^\textnormal{R}\ltrivert v\rtrivert_{1,k}\quad \forall v\in V_k,
\end{equation}
provided $m_1+m_2\geq \mathsf{m}_k^\textnormal{R}$ and 
$$ (\mathsf{m}_k^\textnormal{R})^{1/2}\geq 2^{K-k+2}p_K^2p_{k-1}^\mu \frac{(\mathsf{C}^\textnormal{R})^2}{\mathsf{C}^\textnormal{R}-C_{\textnormal{2lvl}}^\textnormal{R}},$$
with $\mu$ defined as in Remark~\ref{rem:err}.
\end{theorem}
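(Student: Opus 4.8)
The plan is to mirror the inductive argument used for Theorem~\ref{thm:final}, substituting the inherited operators $A_k^\textnormal{R}$ and the new smoothing property \eqref{smoothingpropR}, and then to track how the level-dependent factor $2^{K-k}$ entering $\Sigma_k^\textnormal{R}$ propagates through the recursion. I would argue by induction on $k$. The base case $k=1$ is immediate since $\mathbb{E}_{1,m_1,m_2}=0$. For $k>1$, assuming \eqref{finalR} holds on level $k-1$, I would use the identity \eqref{defEk} to split
\begin{equation*}
\mathbb{E}_{k,m_1,m_2}v=\mathbb{E}^{\textnormal{2lvl}}_{k,m_1,m_2}v+G_k^{m_2}R_{k-1}^k\mathbb{E}_{k-1,m_1,m_2}^2P_k^{k-1}G_k^{m_1}v,
\end{equation*}
and bound each summand separately in the $\ltrivert\cdot\rtrivert_{1,k}$ norm.

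For the first summand I would invoke Theorem~\ref{thm: 2lvlR} directly to get $\ltrivert\mathbb{E}^{\textnormal{2lvl}}_{k,m_1,m_2}v\rtrivert_{1,k}\leq C_{\textnormal{2lvl}}^\textnormal{R}\Sigma_k^\textnormal{R}\ltrivert v\rtrivert_{1,k}$. For the second summand I would chain the now-simplified stability estimates (in which the constant $C_{\textnormal{stab}}$ collapses to $1$ because $\ltrivert R_{k-1}^k v\rtrivert_{1,k}=\ltrivert v\rtrivert_{1,k-1}$ and $\ltrivert P^{k-1}_k v\rtrivert_{1,k-1}\leq\ltrivert v\rtrivert_{1,k}$), the smoothing property \eqref{smoothingpropR} with $s=t=1$ (for which $G_k$ is a contraction in $\ltrivert\cdot\rtrivert_{1,k}$), and the induction hypothesis applied twice to $\mathbb{E}_{k-1,m_1,m_2}^2$. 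This yields the bound $(\mathsf{C}^\textnormal{R})^2(\Sigma_{k-1}^\textnormal{R})^2\ltrivert v\rtrivert_{1,k}$, so that altogether
\begin{equation*}
\ltrivert\mathbb{E}_{k,m_1,m_2}v\rtrivert_{1,k}\leq\bigl(C_{\textnormal{2lvl}}^\textnormal{R}\Sigma_k^\textnormal{R}+(\mathsf{C}^\textnormal{R})^2(\Sigma_{k-1}^\textnormal{R})^2\bigr)\ltrivert v\rtrivert_{1,k}.
\end{equation*}

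The decisive step is then to re-express $(\Sigma_{k-1}^\textnormal{R})^2$ as a multiple of $\Sigma_k^\textnormal{R}$. Using the definition \eqref{SigmaR} together with $\h_{k-1}=2^{K-k+1}\h_K$ and assumption \eqref{pkpk1} (so that $p_{k-1}^{2\mu}/p_k^\mu\lesssim p_{k-1}^\mu$), I would obtain
\begin{equation*}
(\Sigma_{k-1}^\textnormal{R})^2\lesssim 2^{K-k+2}\frac{p_K^2 p_{k-1}^\mu}{(1+m_1)^{1/2}(1+m_2)^{1/2}}\,\Sigma_k^\textnormal{R}.
\end{equation*}
Substituting this into the previous display produces a prefactor of the form $C_{\textnormal{2lvl}}^\textnormal{R}+(\mathsf{C}^\textnormal{R})^2\,2^{K-k+2}p_K^2 p_{k-1}^\mu(1+m_1)^{-1/2}(1+m_2)^{-1/2}$ in front of $\Sigma_k^\textnormal{R}$. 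Finally, imposing $m_1+m_2\geq\mathsf{m}_k^\textnormal{R}$ with $(\mathsf{m}_k^\textnormal{R})^{1/2}\geq 2^{K-k+2}p_K^2 p_{k-1}^\mu(\mathsf{C}^\textnormal{R})^2/(\mathsf{C}^\textnormal{R}-C_{\textnormal{2lvl}}^\textnormal{R})$ forces this prefactor below $\mathsf{C}^\textnormal{R}$, closing the induction.

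The main obstacle — and the qualitative difference from the non-inherited case — is precisely the factor $2^{K-k}$ carried by $\Sigma_k^\textnormal{R}$. Whereas in Theorem~\ref{thm:final} the ratio $\Sigma_{k-1}^2/\Sigma_k$ could be absorbed by a level-independent number of smoothing steps (thanks to the norm equivalence \eqref{DGequiv}), here the same ratio inflates by $2^{K-k}$ at each level of the hierarchy, so the smoothing count $\mathsf{m}_k^\textnormal{R}$ needed to close the induction must grow geometrically with the depth $K-k$. This is exactly what prevents convergence from being uniform in the number of levels, and it is the reason the bound \eqref{finalR} is the best one can expect for inherited bilinear forms.
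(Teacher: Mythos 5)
Your proposal is correct and is exactly the route the paper intends: the paper states Theorem~\ref{thm:finalR} without a written-out proof, indicating only that one repeats the induction of Theorem~\ref{thm:final} with $A_k^\textnormal{R}$ in place of $A_k$, the smoothing property \eqref{smoothingpropR}, and the simplified stability estimates $\ltrivert R_{k-1}^k v\rtrivert_{1,k}=\ltrivert v\rtrivert_{1,k-1}$ and $\ltrivert P_k^{k-1} v\rtrivert_{1,k-1}\leq\ltrivert v\rtrivert_{1,k}$, which is precisely what you do. Your ratio computation $(\Sigma_{k-1}^\textnormal{R})^2/\Sigma_k^\textnormal{R}\leq 2^{K-k+2}p_K^2p_{k-1}^\mu(1+m_1)^{-1/2}(1+m_2)^{-1/2}$ (using $p_{k-1}^{2\mu}/p_k^\mu\leq p_{k-1}^\mu$ and $\h_{k-1}=2^{K-k+1}\h_K$) reproduces the stated threshold on $\mathsf{m}_k^\textnormal{R}$ exactly, and your closing remark correctly identifies the geometric factor $2^{K-k}$ as the obstruction to level-uniform convergence.
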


\section{Numerical results}
\label{numerical}
In this section we show some numerical results to highlight the practical performance of our W-cycle algorithms. We first verify numerically the smoothing (Lemma~\ref{lem:smooth1}) and approximation (Lemma~\ref{lem:approx}) properties of $h$-multigrid for both the SIPG and LDG methods with a fixed penalization parameter $\alpha_k = \alpha = 10$, $k=1,\dots,K$. We consider a sequence of Cartesian meshes on the unit square $\Omega = [0,1]^2$. Since the dependence on the mesh-size is well known, we restrict ourselves to test the dependence of the smoothing property (with $s=2$ and $t=0$ in \eqref{smoothingprop1}) on the polynomial approximation degree, and the number of smoothing steps.
In the first set of experiments reported in Figure~\ref{fig:smoothp}, we fix $h_K = 0.25$ and $m=2$ and let vary the polynomial approximation order $p_K=p=1,2,\ldots,10$. Figure~\ref{fig:smoothm} shows the analogous results obtained by fixing $\h_K = 0.0625$ and $p=2$ and varying $m$.
We have also estimated numerically the approximation property constant \eqref{approxprop} as a function of the polynomial approximation degree, see Figure~\ref{fig:approxp}. 
We observe that our numerical tests confirm the theoretical estimates given in Section~\ref{mulgr}.\\
\begin{figure}[H]
\label{fig:smoothapprox}
\centering
\subfigure[]
 {\includegraphics[width=0.45\textwidth]{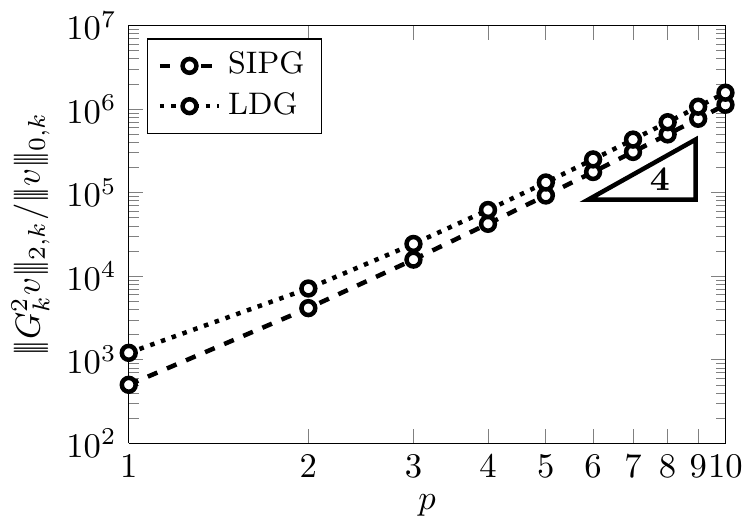}\label{fig:smoothp}}
\hspace{0.2cm}
\subfigure[]
 {\includegraphics[width=0.45\textwidth]{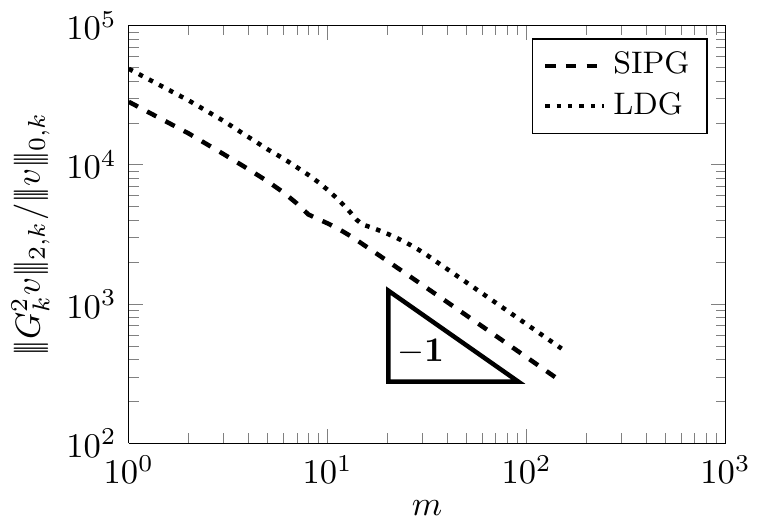}\label{fig:smoothm}}
 \subfigure[]
 {\includegraphics[width=0.45\textwidth]{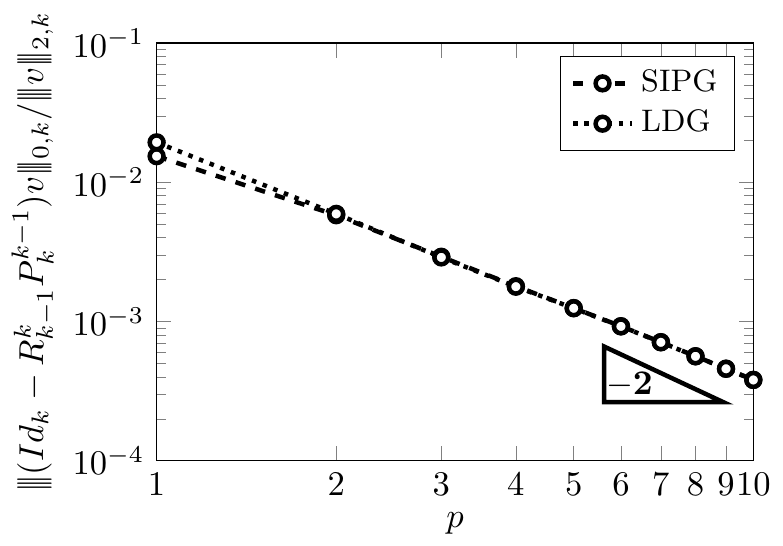}\label{fig:approxp}}
\caption{Estimates of the smoothing property constant as a function of $p$ \subref{fig:smoothp} and $m$ \subref{fig:smoothm} and of the approximation property constant as a function of $p$ \subref{fig:approxp} for the SIPG and LDG methods ($\alpha=10$). Cartesian grid with $h_K = 0.25$.}
\end{figure}

Next, we analyze the convergence factor of the $h$-multigrid iteration of Algorithm~\ref{multilevel}, computing the quantity
\begin{equation*}
\rho = \exp\left(\frac{1}{N}\ln \frac{\|\mathbf{r}_{N}\|_2}{\|\mathbf{r}_{0}\|_2}\right),
\end{equation*}
being $N$ the iteration counts needed to achieve convergence up to a (relative) tolerance of $10^{-8}$ and $\mathbf{r}_{N}$ and $\mathbf{r}_{0}$ being the final and initial residuals, respectively.
For all the test cases, we fix the coarsest level ($\h_1 = 0.25$) and compute
a sequence of nested grids according to the refinement algorithm described in Section~\ref{mulgr}. Table~\ref{tab:hMGvsm} shows the  computed convergence factors obtained with the SIPG and LDG methods ($\alpha=10$, $p=1$) as a function of $m$ and the number of levels, with Cartesian and triangular structured grids, respectively. Here the symbol ``-" means that the convergence has not been reached within a maximum number of 10000 iterations. As predicted in Theorem~\ref{thm:final}, we observe that the convergence factor is independent of the number of levels $k$.
\begin{table}[htb]
\centering
\caption{Convergence factor $\rho$ of $h$-multigrid as a function of $m$ and the number of levels ($\alpha=10$, $p=1$).}
\label{tab:hMGvsm}
\footnotesize
\begin{tabular}{|l|c|c|c|c|c|c|c|c|}
\hline
	&\multicolumn{4}{c|}{SIPG. Cartesian grids.}&\multicolumn{4}{c|}{LDG. Triangular grids.}\\
	\hhline{~--------}
 	&	$k = 2$ 	&	$k = 3$ 	&	$k = 4$ 	&	$k = 5$ 	&	$k = 2$ 	&	$k = 3$ 	&	$k = 4$ 	&	$k = 5$ \\
\hline
$m=1$ 	&	0.8911	&	0.9014	&	0.8955	&	0.8980	&	- 	&	    - 	&	    - 	&	    - 	\\
$m=2$ 	&	0.7958	&	0.8098	&	0.8026	&	0.8050	&	- 	&	    - 	&	    - 	&	    - 	\\
$m=3$ 	&	0.7141  & 0.7311  & 0.7233  & 0.7238	&	- 	&	    - 	&	    - 	&	    - 	\\
$m=4$ 	&	0.6493	&	0.6662	&	0.6587	&	0.6589	&	- 	&	    - 	&	    - 	&	    - 	\\
$m=5$ 	&	0.6043  & 0.6195  & 0.6146  & 0.6129	&	0.9041	&	0.9102	&	0.9097	&	0.9103	\\
$m=6$ 	&	0.5713	&	0.5896	&	0.5873	&	0.5807	&	0.8869	&	0.8927	&	0.8933	&	0.8940	\\
$m=8$ 	&	0.5236	&	0.5442	&	0.5436	&	0.5341	&	0.8542	&	0.8609	&	0.8622	&	0.8628	\\
$m=10$ 	&	0.4847	&	0.4998	&	0.5009	&	0.4906	&	0.8236	&	0.8314	&	0.8330	&	0.8335	\\
$m=12$ 	&	0.4514	&	0.4575	&	0.4605	&	0.4537	&	0.7952	&	0.8042	&	0.8059	&	0.8063	\\
$m=14$ 	&	0.4206	&	0.4159	&	0.4238	&	0.4137	&	0.7689	&	0.7802	&	0.7812	&	0.7814	\\
$m=16$ 	&	0.3916	&	0.3849	&	0.3885	&	0.3790	&	0.7451	&	0.7607	&	0.7596	&	0.7595	\\
$m=18$ 	&	0.3667	&	0.3565	&	0.3560	&	0.3514	&	0.7250	&	0.7453	&	0.7422	&	0.7413	\\
$m=20$ 	&	0.3432	&	0.3348	&	0.3312	&	0.3267	&	0.7087	&	0.7328	&	0.7291	&	0.7266	\\
\hline
\end{tabular}
\end{table}
For the sake of completeness, in Table~\ref{tab:hMGRvsm} we also verify the estimate given in Theorem~\ref{thm:finalR} obtained by considering $A^\textnormal{R}_k$ instead of $A_k$ for the SIPG method on structured triangular grids. As predicted theoretically, the convergence rate increases with the number of levels.

\begin{table}[htb]
\centering
\caption{SIPG method. Convergence factor $\rho$ of $h$-multigrid as a function of $m$ and the number of levels with $A_k^\textnormal{R}$ ($\alpha=10$, $p=1$). Triangular grids.}
\label{tab:hMGRvsm}
\footnotesize
\begin{tabular}{|l|c|c|c|c|c|c|}
\hline
 	&	$k = 2$ 	&	$k = 3$ 	&	$k = 4$ 	&	$k = 5$ 	&	$k = 6$ 	&	$k = 7$ \\
\hline
$m = 1$ & 0.8766 & 0.8978 & 0.9190 & 0.9299 & 0.9352 & 0.9387\\
$m = 2$ & 0.7820 & 0.8296 & 0.8586 & 0.8754 & 0.8840 & 0.8895\\
$m = 3$ & 0.7169 & 0.7793 & 0.8123 & 0.8314 & 0.8418 & 0.8486\\
$m = 4$ & 0.6734 & 0.7422 & 0.7754 & 0.7948 & 0.8060 & 0.8135\\
$m = 5$ & 0.6405 & 0.7132 & 0.7448 & 0.7633 & 0.7749 & 0.7827\\
$m = 6$ & 0.6128 & 0.6882 & 0.7184 & 0.7359 & 0.7473 & 0.7552\\
$m = 8$ & 0.5672 & 0.6437 & 0.6733 & 0.6894 & 0.7005 & 0.7089\\
$m = 10$ & 0.5277 & 0.6046 & 0.6337 & 0.6497 & 0.6611 & 0.6697\\
$m = 12$ & 0.4922 & 0.5687 & 0.5974 & 0.6139 & 0.6257 & 0.6347\\
$m = 14$ & 0.4604 & 0.5358 & 0.5645 & 0.5807 & 0.5930 & 0.6023\\
$m = 16$ & 0.4302 & 0.5046 & 0.5334 & 0.5497 & 0.5620 & 0.5717\\
$m = 18$ & 0.4034 & 0.4763 & 0.5040 & 0.5210 & 0.5333 & 0.5432\\
$m = 20$ & 0.3759 & 0.4508 & 0.4770 & 0.4938 & 0.5062 & 0.5160\\
\hline
\end{tabular}
\end{table}

In Table~\ref{tab:hMGvsp}, we show the iteration counts and convergence factor (between parenthesis) of $h$-multigrid as a function of the polynomial approximation degree $p$ and for different levels $k$, for both SIPG and LDG methods. Here $m = 6$ so that convergence is guaranteed in all the cases. We also compare our results with the iteration counts of the Conjugate Gradient (CG) algorithm. It is clear that the multigrid algorithm converges much faster than CG and, as expected from estimate \eqref{final}, the iteration counts needed to get convergence increases with $p$. However, estimate \eqref{final} seems to be rather pessimistic with respect to numerical simulations.
\begin{table}[htb]
\centering
\caption{Iteration counts and convergence factor (between parenthesis) of  $h$-multigrid as a function of $p$ and the number of levels $k$ ($\alpha=10$, $m=6$).}
\label{tab:hMGvsp}
\footnotesize
\begin{tabular}{|l|c|c|c|c|c|c|}
\hline
	&\multicolumn{3}{c|}{SIPG. Cartesian grids.}&\multicolumn{3}{c|}{LDG. Triangular grids.}\\
	\hhline{~------}
 	&	$k = 2$ 	&	$k = 3$ 	&	$k = 4$ &	$k = 2$ 	&	$k = 3$ 	&	$k = 4$\\
 	\hline
$p = 1$ & 33 (0.57) & 35 (0.59) & 35 (0.59) & 154 (0.89) & 163 (0.89) & 164 (0.89) \\ 
$p = 2$ & 125 (0.86) & 123 (0.86) & 120 (0.86) & 286 (0.94) & 311 (0.94) & 304 (0.94) \\ 
$p = 3$ & 182 (0.90) & 175 (0.90) & 150 (0.88) & 487 (0.96) & 516 (0.96) & 465 (0.96) \\ 
$p = 4$ & 296 (0.94) & 246 (0.93) & 243 (0.93) & 761 (0.98) & 682 (0.97) & 555 (0.97) \\  
$p = 5$ & 407 (0.96) & 354 (0.95) & 362 (0.95) & 838 (0.98) & 644 (0.97) & 576 (0.97) \\ 
$p = 6$ & 553 (0.97) & 483 (0.96) & 489 (0.96) & 856 (0.98) & 659 (0.97) & 741 (0.98) \\ 
\hhline{~------}
&\multicolumn{6}{c|}{CG iteration counts}\\
\hhline{~------}
$ p = 1$ & 65 &  130 &  256 &  286 &  607 & 1218\\
$ p = 2$ & 142 &  281 &  567 &  575 & 1150 & 2343\\
$ p = 3$ & 244 &  499 & 1021 &  914 & 1640 & 3322\\
$ p = 4$ & 400 &  828 & 1687 & 1197 & 2483 & 5053\\
$ p = 5$ & 646 & 1342 & 2721 & 1686 & 3487 & 7090\\
$ p = 6$ & 1130 & 2369 & 4818 & 2286 & 4746 & 9676\\
\hline
\end{tabular}
\end{table}

We next present some numerical results obtained with $p$-multigrid algorithm. We fix the mesh size $\h_k = 0.0625$, for any $k$, while we set $p_{k-1} = p_{k}-1$, with the convention that $p_K = p$. In Table~\ref{tab:pMGvsm} we report the iteration counts and the convergence factor (between parenthesis) as a function of the number of smoothing steps $m$ and the number of levels $k$ for $p = 5$. Since, with $p_{k-1} = p_k-1$, the ratio $p_k/p_{k-1}$ is not constant among the levels, the uniformity with respect to the number of levels is reached asymptotically and is not fully appreciated in Table~\ref{tab:pMGvsm}. As before we also report CG iteration counts: we observe that, even with a relatively small number of pre- and post-smoothing steps, the W-cycle algorithm outperforms CG method.
\begin{table}[htb]
\centering
\caption{Convergence factor $\rho$ of $p$-multigrid as a function of $m$ and the number of levels ($\alpha=10$, $p=5$).}
\label{tab:pMGvsm}
\footnotesize
\begin{tabular}{|l|c|c|c|c|c|c|c|c|}
\hline
	&\multicolumn{3}{c|}{SIPG. Cartesian grids.}&\multicolumn{3}{c|}{LDG. Triangular grids.}\\
	\hhline{~------}
 	&	$k = 2$ 	&	$k = 3$ 	&	$k = 4$ &	$k = 2$ 	&	$k = 3$ 	&	$k = 4$\\
 	\hline
$m = 1$ & 444 (0.96) & 498 (0.96) & 676 (0.97) & - & - & - \\ 
$m = 2$ & 256 (0.93) & 271 (0.93) & 369 (0.95) & 383 (0.95) & 414 (0.96) & 639 (0.97) \\ 
$m = 4$ & 140 (0.88) & 147 (0.88) & 217 (0.92) & 219 (0.92) & 223 (0.92) & 355 (0.95) \\ 
$m = 6$ & 115 (0.85) & 117 (0.85) & 161 (0.89) & 162 (0.89) & 156 (0.89) & 254 (0.93) \\ 
$m = 8$ & 104 (0.84) & 103 (0.84) & 128 (0.87) & 136 (0.87) & 131 (0.87) & 200 (0.91) \\ 
$m = 10$ & 92 (0.82) & 92 (0.82) & 107 (0.84) & 120 (0.86) & 117 (0.85) & 166 (0.89) \\ 
$m = 12$ & 82 (0.80) & 82 (0.80) & 92 (0.82) & 108 (0.84) & 106 (0.84) & 142 (0.88) \\ 
$m = 14$ & 74 (0.78) & 74 (0.78) & 81 (0.80) & 98 (0.83) & 97 (0.83) & 125 (0.86) \\ 
$m = 16$ & 68 (0.76) & 68 (0.76) & 72 (0.77) & 91 (0.82) & 90 (0.81) & 112 (0.85) \\ 
$m = 18$ & 62 (0.74) & 62 (0.74) & 65 (0.75) & 84 (0.80) & 83 (0.80) & 101 (0.83) \\ 
$m = 20$ & 58 (0.73) & 58 (0.73) & 60 (0.73) & 79 (0.79) & 78 (0.79) & 92 (0.82) \\
\hline
&\multicolumn{3}{c|}{CG iteration counts:  $1347$}&\multicolumn{3}{c|}{CG iteration counts:  $3495$}\\
\hline
\end{tabular}
\end{table}
In Table~\ref{tab:pMGvsp},we fix the number of levels and vary the polynomial approximation degree $p=2,3,\ldots,6$, and report the $p$-multigrid iteration counts and the convergence factor (between parenthesis). As before we address the performance of both the SIPG and of the LDG
methods on Cartesian and triangular grids, respectively.
Comparing the iteration counts with the analogous one computed with CG algorithm (last column) we can conclude that even if the iteration counts grows as $p$ increases, the W-cycle algorithm always outperforms CG iterative scheme.
\begin{table}[htb]
\centering
\caption{Iteration counts and convergence factor (between parenthesis) of $p$-multigrid as a function of $p$ and the number of levels ($\alpha=10$, $m=10$).}
\label{tab:pMGvsp}
\footnotesize
\setlength{\tabcolsep}{4.5pt}
\begin{tabular}{|l|c|c|c|c|c|c|c|c|}
\hline
	&\multicolumn{4}{c|}{SIPG. Cartesian grids.}& \multicolumn{4}{c|}{LDG. Triangular grids.}\\
  \hhline{~--------}
 	&	$k = 2$ 	&	$k = 3$ 	&	$k = 4$ & CG &	$k = 2$ 	&	$k = 3$ 	&	$k = 4$ & CG\\
 \hline
$p = 2$ 	&	34 (0.58)	&	-	&	-	&	281 &86 (0.81)	&	-&	-	& 1157\\
$p = 3$ 	&	78 (0.79)	&	76 (0.78)	&	-	& 499 &	104	(0.84)&	107 (0.84)&	-	& 1639\\
$p = 4$ 	&	71 (0.77)	&	74 (0.78)	&	75 (0.78)& 828	&	115	(0.85)&	119	(0.86)&	143 (0.88)	& 2491\\
$p = 5$ 	&	92 (0.82)	&	92 (0.82)&	107	(0.84)&	1347 & 120	(0.86)&	117	(0.85)&	166	(0.89)	& 3495\\
$p = 6$ 	&	113	(0.85) &	113 (0.85)	&	109	(0.84) & 2370 &	145	(0.88)&	143	(0.88)&	158	(0.89)& 4737\\
\hline
\end{tabular}
\end{table}
\section{Conclusions}
\label{conclusion}

We have analyzed a W-cycle $hp$-multigrid scheme for high order DG discretizations of elliptic problems. We have shown uniform convergence with respect to $h$, provided the number of pre- and post-smoothing steps is sufficiently large, and we have tracked the dependence of the convergence factor of the method on the polynomial approximation degree $p$. Besides the traditional approach, where the coarse matrices are built on each level \cite{GopKan03,BrenZhao,BrennCuiSu,BrenCuGuSu}, we have also considered the case of inherited bilinear forms, showing that the rate of convergence cannot be uniform with respect to the number of levels. Finally, the theoretical results obtained in this paper pave the way for future developments in obtaining uniform $p$-multigrid methods by introducing more sophisticated smoothing schemes. Such an issue will be object of future research.

\FloatBarrier
\appendix
\section{Proof of Theorem~\ref{thm:DGerr}} \label{appendixA}
Before proving Theorem~\ref{thm:DGerr}, we recall the following $hp$-approximation result and report its proof for the sake of completeness.
\begin{lemma}
For any $v\in H^{s+1}(\mathcal{T}_K)$, there exists $\Pi_{h_K}^{p_K}v \in V_K$, $p_K=1,2,\ldots,$ such that
\begin{equation}
\label{interpDG}
\|v-\Pi_{h_K}^{p_K}v\|_{\textnormal{DG},K}\lesssim\frac{\h_K^{\min{(p_K,s)}}}{p_K^{s-1/2}}\|v\|_{H^{s+1}(\mathcal{T}_K)}.
\end{equation}
\end{lemma}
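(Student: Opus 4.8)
The plan is to construct $\Pi_{h_K}^{p_K}$ elementwise through the reference element and to reduce the global DG estimate to a pair of local $hp$-approximation bounds: a volume (broken $H^1$) bound and a sharp \emph{trace} bound. First I would define the projector locally by $(\Pi_{h_K}^{p_K}v)|_T = \big(\widehat{\Pi}^{p_K}\widehat{v}\big)\circ \mathsf{F}_T^{-1}$, where $\widehat{v}=(v|_T)\circ\mathsf{F}_T$ and $\widehat{\Pi}^{p_K}:H^{s+1}(\widehat{T})\to\mathbb{M}^{p_K}(\widehat{T})$ is the classical $hp$-projector on the reference element (a tensor product of one-dimensional projectors on the reference hypercube, and the corresponding simplicial construction on the reference simplex). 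By the Babu\v{s}ka--Suri/Schwab approximation theory, after scaling back to $T$ through $\mathsf{F}_T$ and using shape-regularity (so $h_T\simeq\h_K$), this projector satisfies, for $0\le q\le s+1$,
\begin{equation*}
\|v-\Pi_{h_K}^{p_K}v\|_{H^q(T)}\lesssim \frac{\h_K^{\min(p_K,s)+1-q}}{p_K^{\,s+1-q}}\|v\|_{H^{s+1}(T)},
\end{equation*}
together with the refined trace estimate, morally the case $q=1/2$,
\begin{equation*}
\|v-\Pi_{h_K}^{p_K}v\|_{L^2(\partial T)}\lesssim \frac{\h_K^{\min(p_K,s)+1/2}}{p_K^{\,s+1/2}}\|v\|_{H^{s+1}(T)}.
\end{equation*}

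With these local bounds in hand, the two parts of the DG norm \eqref{DGnorm} are treated separately. The broken-gradient term follows immediately by summing the $q=1$ estimate over $T\in\mathcal{T}_K$, giving a contribution of order $\h_K^{\min(p_K,s)}/p_K^{\,s}$, which is already within (indeed sharper than) the claimed bound. The delicate part is the penalty term. On an interior face $F=\partial T^+\cap\partial T^-$ I would write $\llbracket v-\Pi_{h_K}^{p_K}v\rrbracket=(v-\Pi_{h_K}^{p_K}v)^+\mathbf{n}_{T^+}+(v-\Pi_{h_K}^{p_K}v)^-\mathbf{n}_{T^-}$ and estimate each one-sided trace by the refined trace bound (boundary faces carry a single trace treated identically). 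Since $\sigma_K|_F\simeq \alpha_K p_K^2/\h_K$ by \eqref{eq:penalty}, this yields
\begin{equation*}
\sigma_K\,\|\llbracket v-\Pi_{h_K}^{p_K}v\rrbracket\|_{L^2(F)}^2\lesssim \frac{p_K^2}{\h_K}\cdot\frac{\h_K^{2\min(p_K,s)+1}}{p_K^{\,2s+1}}\,\|v\|_{H^{s+1}(\mathcal{T}_K)}^2 = \frac{\h_K^{2\min(p_K,s)}}{p_K^{\,2s-1}}\,\|v\|_{H^{s+1}(\mathcal{T}_K)}^2,
\end{equation*}
whose square root is exactly $\h_K^{\min(p_K,s)}/p_K^{\,s-1/2}$. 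Summing over all faces (a bounded number per element by shape-regularity) and combining with the gradient term gives \eqref{interpDG}; note that the jump term is the one that fixes the rate $p_K^{\,s-1/2}$, the gradient term being lower order in $p_K$.

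The main obstacle is securing the correct power of $p_K$ in the trace estimate. A naive scaled trace inequality $\|w\|_{L^2(\partial T)}^2\lesssim \h_K^{-1}\|w\|_{L^2(T)}^2+\h_K|w|_{H^1(T)}^2$ applied to $w=v-\Pi_{h_K}^{p_K}v$, after multiplication by $\sigma_K\simeq p_K^2/\h_K$, only delivers a decay of order $p_K^{-(s-1)}$, losing a factor $p_K^{1/2}$ relative to the target $p_K^{-(s-1/2)}$. Hence one cannot argue purely through a generic trace inequality plus the interior bounds; one must instead invoke the projector-specific, half-order trace estimate displayed above, which is where the full strength of the $hp$-approximation theory enters (Babu\v{s}ka--Suri and Schwab on tensor-product elements, and the analogous simplicial estimates of Georgoulis--S\"uli). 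Establishing that sharp trace bound is the technical heart of the argument; once it is available, the remainder is the elementary face-by-face summation sketched above.
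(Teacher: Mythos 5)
Your proof is correct and follows essentially the same route as the paper: the elementwise Babu{\v{s}}ka--Suri projector, the interior $hp$ estimates, the half-order trace bound, and the face-by-face summation against $\sigma_K\simeq \alpha_K p_K^2/\h_K$ are exactly the paper's ingredients, and your bookkeeping of the powers of $\h_K$ and $p_K$ (including the observation that the jump term dictates the rate $p_K^{s-1/2}$ while the gradient term is of higher order) matches the paper's. The one point where you diverge --- and where your closing claim is mistaken --- is the provenance of the refined trace bound. You assert that it cannot be obtained from a generic trace inequality plus the interior estimates and must be invoked as a projector-specific result; the paper (following Houston--Schwab--S\"uli) instead derives it in one line from the \emph{multiplicative} trace inequality
\begin{equation}
\|w\|_{L^2(\partial T)}^2\lesssim \|w\|_{L^2(T)}\|\nabla w\|_{L^2(T)}+h_T^{-1}\|w\|_{L^2(T)}^2,
\end{equation}
applied to $w=v-\Pi_{h_K}^{p_K}v$: the geometric mean of the $q=0$ rate $\h_K^{\min(p_K,s)+1}/p_K^{s+1}$ and the $q=1$ rate $\h_K^{\min(p_K,s)}/p_K^{s}$ is precisely $\h_K^{\min(p_K,s)+1/2}/p_K^{s+1/2}$, and the term $h_T^{-1}\|w\|_{L^2(T)}^2$ is of even higher order in $p_K$. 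Your diagnosis of the \emph{additive} inequality $\|w\|_{L^2(\partial T)}^2\lesssim \h_K^{-1}\|w\|_{L^2(T)}^2+\h_K|w|_{H^1(T)}^2$ is right --- its $H^1$ term is $h$-balanced but not $p$-balanced and loses a factor $p_K^{1/2}$ --- but the multiplicative form is equally generic (it holds for all $w\in H^1(T)$, independently of the projector) and recovers the half power by interpolating between the two interior rates. So what you describe as the ``technical heart'' requiring the full strength of projector-specific $hp$-theory is in fact an elementary consequence of the two interior bounds you already quoted; with that correction, your argument and the paper's coincide.
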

\begin{proof}
For any $v\in H^{s+1}(\mathcal{T}_K)$, let $\Pi_{h_K}^{p_K}v \in V_K$ be defined as $\Pi_{h_K}^{p_K}v|_T=\pi_{h_K}^{p_K}(v|_T)$, for any $T \in \mathcal{T}_K$, $\pi_{h_K}^{p_K}(\cdot)$ being the Babu{\v{s}}ka-Suri interpolant \cite{BabSur87}. From \cite[Lemma~4.5]{BabSur87} we have that
\begin{equation}
\label{interp1}
\begin{aligned}
\|u-\Pi_{h_K}^{p_K}u\|_{H^q(T)}\lesssim \frac{h_T^{\min(p_T+1,s+1)-q}}{p_T^{s+1-q}}\|u\|_{H^{s+1}(T)},
&& 0\leq q\leq t
\end{aligned}
\end{equation}
for any $T \in \mathcal{T}_K$. Moreover, as suggested in \cite{HouSchSul}, by a multiplicative trace inequality and \eqref{interp1}, we also have
\begin{align}
\label{interp2}
\|u-\Pi_{h_K}^{p_K}u\|_{L^2(\partial T)}^2&\lesssim \|u-\Pi_{h_K}^{p_K}u\|_{L^2(T)}\|\nabla(u-\Pi_{h_K}^{p_K}u)\|_{L^2(T)}+h_T^{-1}\|u-\Pi_{h_K}^{p_K}u\|_{L^2(T)}^2\notag\\ 
&\lesssim\frac{h_T^{2\min(p_T,s)+1}}{p_T^{2s+1}}\|u\|_{H^{s+1}(T)}^2,
\end{align}
for any $T \in \mathcal{T}_K$.
The thesis follows applying \eqref{interp1} and \eqref{interp2} to the definition \eqref{DGnorm} of the DG norm.
\end{proof}

We are now ready to prove Theorem~\ref{thm:DGerr}.
\begin{proof}{[Proof of Theorem~\ref{thm:DGerr}.]}
The proof follows the lines given in \cite{PerSchot}; for the sake of completeness we sketch it.
It can be shown that formulation \eqref{DG} is not strongly consistent, {\it i.e.}, 
\begin{equation}
\label{Galortho}
\mathcal{A}_K(u-u_K,v) = R(u,v)\quad\forall v\in V(\h_K),
\end{equation}
where the residual $R(\cdot,\cdot):V(\h_K)\times V(\h_K)\rightarrow \mathbb{R}$ is defined as
\begin{equation}
R(u,v) = \sum_{T\in\mathcal{T}_K}\int_T\nabla u\cdot\left[\mathcal{R}_K(\llbracket v\rrbracket)+\mathcal{L}_K(\boldsymbol{\beta}\cdot\llbracket z\rrbracket)\right]\ dx
+\sum_{F\in\mathcal{F}_K}\int_F\llaverage\nabla u\rraverage\cdot\llbracket v\rrbracket\ ds.
\end{equation}
As shown in \cite{PerSchot}, the residual $R(\cdot,\cdot)$ can be bounded by
\begin{equation}
\label{residualest}
|R(u,v)|\lesssim \frac{h_K^{\min(p_K+1,s)}}{p_K^{s}}\|\nabla u\|_{H^{s}(\mathcal{T}_K)}\|v\|_{\textnormal{DG},K}\qquad\forall v\in V(\h_K).
\end{equation}
From the continuity and coercivity bounds \eqref{cont} and \eqref{coerc}, and Strang's lemma, we have
\begin{equation}
\label{strang}
\|u-u_K\|_{DG,K} \lesssim \inf_{v\in V_K} \|u-v\|_{DG,K}+\sup_{w\in V_K} \frac{|R(u,w)|}{\|w\|_{DG,K}}.
\end{equation}
Inequality \eqref{DGerror1} follows by choosing $v = \Pi_{h_K}^{p_K}u \in V_K$, and substituting \eqref{interpDG} and the residual estimate \eqref{residualest} in \eqref{strang}.
With regards to estimate \eqref{L2error1}, we proceed by a standard duality argument: let $w\in  H^2(\Omega) \cap H_0^1(\Omega)$ be the solution of the problem
\begin{equation*}
\int_\Omega \nabla w \cdot \nabla v\ dx = \int_\Omega (u-u_K)v\ dx\qquad \forall v\in H_0^1(\Omega).
\label{adjoint}
\end{equation*}
We recall that, thanks to the regularity assumption \eqref{elliptic}, it holds that
\begin{equation*}
\left\|w\right\|_{H^{2}(\Omega)}\lesssim \left\|u-u_K\right\|_{{L^2}(\Omega)}.
\label{ellipticw}
\end{equation*}
According to \eqref{Galortho}, it is immediate to obtain
\begin{equation}
\|u-u_K\|_{L^2(\Omega)}^2 = \mathcal{A}_K(w,u-u_K)-R(w,u-u_K),
\end{equation}
and 
\begin{equation*}
\mathcal{A}_K(w_I,u-u_K) = R(u,w_I) = -R(u,w-w_I),\\
\end{equation*}
with $w_I\in V_K$. Hence,
$$\|u-u_K\|_{L^2(\Omega)}^2 = \mathcal{A}_K(w-w_I,u-u_K)-R(w,u-u_K)-R(u,w-w_I).$$
Applying continuity \eqref{cont} and the residual estimate \eqref{residualest}, we obtain
\begin{multline*}
\|u-u_K\|_{L^2(\Omega)}^2
\lesssim \|w-w_I\|_{\textnormal{DG},K}\|u-u_K\|_{\textnormal{DG},K}+\frac{\h_K}{p_K}\|\nabla w\|_{H^1(\Omega)}\|u-u_K\|_{\textnormal{DG},K}\\
+\frac{\h_K^{\min(p_K+1,s)}}{p_K^s}\|\nabla u\|_{H^s(\mathcal{T}_K)}\|w-w_I\|_{\textnormal{DG},K}.
\end{multline*}
If we choose $w_I=\Pi_{h_K}^{p_K}w$, it holds that
\begin{equation}
\|w-w_I\|_{\textnormal{DG},K}\lesssim\frac{\h_K}{p_K^{1/2}}\|w\|_{H^2(\Omega)}\lesssim\frac{\h_K}{p_K^{1/2}}\|u-u_K\|_{L^2(\Omega)},
\end{equation}
which together with \eqref{DGerror1} and $\|\nabla w\|_{H^1(\Omega)}\lesssim \|u-u_K\|_{L^2(\Omega)}$ gives the thesis.
\end{proof}

\bibliographystyle{siam}
\bibliography{biblio}

\end{document}